\documentclass[11pt, reqno]{amsart}
\usepackage{amsmath}
\usepackage{amssymb}
\usepackage{fullpage}
\usepackage{amsthm}
\usepackage{graphicx}
\usepackage{placeins}
\usepackage{caption}
\usepackage{extpfeil}
\usepackage{enumerate}
\usepackage{caption}
\usepackage{subcaption, esint}
\usepackage{hyperref}
\usepackage{xcolor}
\usepackage{pbox}

\definecolor{blue}{rgb}{0,0,0.9}
\definecolor{purple}{rgb}{0.6,0,0.9}

\newcommand{\be}{\begin{equation}}
\newcommand{\ee}{\end{equation}}
\newcommand{\bee}{\begin{equation*}}
\newcommand{\eee}{\end{equation*}}

\newcommand{\supp}{\text{\textup{supp}}\,}

\linespread{1.02}

\newtheorem{theorem}{Theorem}[section]
\newtheorem{proposition}[theorem]{Proposition}

\newtheorem{lemma}[theorem]{Lemma}

\theoremstyle{definition}
\newtheorem{remark}[theorem]{Remark}
\theoremstyle{definition}
\newtheorem*{remark*}{Remark}

\numberwithin{equation}{section}

\parindent 1em
\parskip 1ex

\begin{document}

\title{Sharp stability for the interaction energy}
\author{Xukai Yan \,and\, Yao Yao
}

\begin{abstract}
This paper is devoted to stability estimates for the interaction energy with strictly radially decreasing interaction potentials, such as the Coulomb and Riesz potentials. For a general density function, we first prove a stability estimate in terms of the $L^1$ asymmetry of the density, extending some previous results by Burchard--Chambers \cite{BC, BC2}, Frank--Lieb \cite{FL} and Fusco--Pratelli \cite{FP} for characteristic functions. We also obtain a stability estimate in terms of the 2-Wasserstein distance between the density and its radial decreasing rearrangement. Finally, we consider the special case of Newtonian potential, and address a conjecture by Guo on the stability for the Coulomb energy.
\end{abstract}
\maketitle

\section{Introduction}

For a density $\rho \in L^1_+(\mathbb{R}^n) \cap L^\infty(\mathbb{R}^n)$, let $\mathcal{E}_W[\rho]$ be the interaction energy of $\rho$ with interaction potential $W$, given by
\[
\mathcal{E}_W[\rho] :=  \int_{\mathbb{R}^n} \int_{\mathbb{R}^n}\rho(x) \rho(y) W(x-y)\,dx dy.
\]  
Throughout this paper, we focus on potentials $W\in C^1(\mathbb{R}^n\setminus\{0\}) \cap L^1_{loc}(\mathbb{R}^n)$ that are radially symmetric, and strictly decreasing in the radial variable. As an important special case, when $n=3$ and $W=\mathcal{N}=\frac{1}{4\pi|x|}$ is the Newtonian potential in $\mathbb{R}^3$, $\mathcal{E}_\mathcal{N}[\rho]$ represents the Coulomb energy of a charge with density $\rho$.

Let $\rho^*$ be the radially symmetric decreasing rearrangement of $\rho$; see \cite[Section 3.3]{LiebLoss} for a definition.
The celebrated Riesz's rearrangement inequality \cite[Section 3.7]{LiebLoss} gives
\begin{equation}\label{riesz}
\mathcal{E}_W[\rho^*] \geq \mathcal{E}_W[\rho],
\end{equation}
with equality achieved if and only if $\rho$ is equal to $\rho^*$ almost everywhere after a translation \cite{Lieb}.  Heuristically speaking, \eqref{riesz} describes that symmetrization reduces the typical distance between the charges, thus increases the interaction energy.

The goal of this paper is to improve \eqref{riesz} to a quantitative version: if its two sides almost agree, how close must $\rho$ be to a translation of $\rho^*$? In other words, we want to obtain a stability estimate of the form
\begin{equation}\label{ineq_all}
\mathcal{E}_W[\rho^*] - \mathcal{E}_W[\rho] \geq d(\rho,\rho^*) \geq 0,
\end{equation} where $d(\rho,\rho^*)$  measures the ``asymmetry'' of $\rho$, and should equal to 0 if and only if $\rho$ agrees with $\rho^*$ almost everywhere after a translation. Throughout this paper, we denote by $T_a$ the operator that translates a function by a vector $a\in\mathbb{R}^n$, that is, $T_a \rho := \rho(\cdot+a)$. 

Let us review some previous results on stability estimates for \eqref{riesz}. A natural way to measure the distance between $\rho$ and $\rho^*$ is to consider their minimum $L^1$ distance among all translations $T_a$, and then normalize it, i.e.
\begin{equation}\label{def_delta_rho}
\delta(\rho) := \inf_{a\in\mathbb{R}^n} \frac{\| T_a \rho - \rho^*\|_{L^1(\mathbb{R}^n)}}{2\|\rho\|_{L^1(\mathbb{R}^n)}},
\end{equation}
where the factor 2 in the denominator ensures $\delta(\rho)\in[0,1)$.
 When $\rho=1_D$ is a characteristic function, and $W=\mathcal{N}$ is the Newtonian potential in $\mathbb{R}^3$, Burchard and Chambers \cite{BC} obtained a sharp stability estimate for $\mathbb{R}^3$ that
\begin{equation}\label{bc}
\mathcal{E}_{\mathcal{N}}[1_D^*]-\mathcal{E}_{\mathcal{N}}[1_D]\geq c |D|^{\frac{5}{3}}\,\delta(1_D)^2
\end{equation}
for some $c>0$,
where the estimate is sharp in the sense that the exponent 2 on the right hand side cannot be replaced by any smaller ones. They also used a different approach to obtain a stability estimate for Newtonian potentials in $\mathbb{R}^n$ for $n>3$, however the exponent $2$ is replaced by a non-sharp exponent $n+2$.

Recently, Frank and Lieb \cite{FL}, Fusco and Pratelli \cite{FP}, and Burchard and Chambers \cite{BC2} independently  obtained sharp stability estimates for the interaction energy in all dimensions for power-law potentials $W_k$, given by
\begin{equation}\label{def_riesz}
W_k(x) :=\begin{cases} -\dfrac{|x|^{k}}{k}& \text{ for }k\neq 0,\\
-\log|x| &\text{ for } k=0.
\end{cases}
\end{equation}
For $n\geq 2$, when $\rho=1_D$ is a characteristic function\footnote{The original statement in \cite{FP} focused on $\rho=1_D$ with $|D|=1$, but after a simple scaling argument one gets \eqref{fp} for any $|D|>0$.}, Fusco and Pratelli \cite{FP} proved that there exists some $c(n,k)>0$ such that
\begin{equation}\label{fp}
\mathcal{E}_{W_k}[1_D^*]-\mathcal{E}_{W_k}[1_D]\geq c(n,k) |D|^{2+\frac{k}{n}}\delta(1_D)^2\quad\text{ for } k\in(-n+1,0),
\end{equation}
using a delicate combination of geometrical and mass transportation arguments. Very recently \eqref{fp} is also obtained by Burchard and Chambers \cite{BC2} using a Fuglede-type estimate \cite{Fuglede} in combination with global rearrangements.
The proofs of  \eqref{bc} and \eqref{fp}  both strongly rely on the assumption that $\rho=1_D$ is a characteristic function, and could not be easily extended to general densities. 

For $n\geq 1$ and a general density function $\rho$ satisfying $0\leq \rho\leq 1$, Frank and Lieb \cite[Theorem~4--5]{FL} obtained the following inequality comparing $\mathcal{E}_{W_k}[\rho]$ with $\mathcal{E}_{W_k}[1_{E^*}]$, where $E^*$ is a ball centered at the origin with $|E^*|=\int_{\mathbb{R}^n} \rho dx$:
\begin{equation}\label{fl_ineq}
\mathcal{E}_{W_k}[1_{E^*}]-\mathcal{E}_{W_k}[\rho]\geq c(n,k) \|\rho\|_{L^1}^{2+\frac{k}{n}}\,\tilde\delta(\rho,1_{E^*})^2\quad\text{ for } k\in(-n,\infty),
\end{equation}
where $\tilde\delta(\rho,1_{E^*})$ is defined the same as $\delta(\rho)$ except that $\rho^*$ is replaced by $1_{E^*}$.
Their proof is built on a deep result by Christ \cite{christ, FL2} on stability estimates for $\mathcal{E}_{1_B}[1_{E^*}] - \mathcal{E}_{1_B}[\rho]$, where $B$ is a ball centered at the origin. Note that in the special case when $\rho=1_D$ is a characteristic function, \eqref{fl_ineq} directly becomes \eqref{fp} (for a broader range of $k$) since $1_{E^*}=1_{D}^*$. However, for general densities with $0\leq \rho\leq 1$,  \eqref{fl_ineq} does not imply a stability estimate for $\mathcal{E}_{W_k}[\rho^*]-\mathcal{E}_{W_k}[\rho]$, since $\mathcal{E}_{W_k}[\rho^*] \leq \mathcal{E}_{W_k}[1_{E^*}]$.

To our best knowledge, we are unaware of any stability estimates of $\mathcal{E}_{W}[\rho^*]-\mathcal{E}_{W}[\rho]$ for densities that are not characteristic functions. The following results deal with stability estimates for other nonlocal functionals (also restricted to either sets or characteristic functions): Carlen and Maggi \cite[Theorem 1.5]{CM} obtained stability estimates for Riesz's rearrangement inequality for two different characteristic functions $1_E$ and $1_F$, of the form $\int_{E^*} \int_{F^*} W(x-y) dxdy -  \int_{E} \int_{F} W(x-y) dxdy \gtrsim \delta(1_E)^{8(n+2)}$. For stability properties of balls with respect to nonlocal energies involving the fractional perimeter, see \cite{DNRV, FFMMM, GNR}.

One goal of our paper is to extend \eqref{fp} to a general density $\rho \in L_+^1(\mathbb{R}^n) \cap L^\infty(\mathbb{R}^n)$; see Theorem~\ref{theorem1} below. The theorem holds for all power-law potentials $W_k$ with $k\in(-n,2]$. Our proof strategy is completely different from those in \cite{BC, BC2, FL, FP}; in fact our approach is quite elementary.

Another natural question is whether one can use another distance different from the $L^1$ norm to measure $d(\rho,\rho^*)$ in \eqref{ineq_all}. Our second main result is a stability estimate of the interaction energy in terms of the 2-Wasserstein distance between $\rho^*$ and a translation of $\rho$; see Theorem~\ref{theorem2} below. The 2-Wasserstein distance naturally arises in many studies of the interaction energy, see \cite{AGS, CMV, CMV2, McCann, Villani}.

Other than the $L^1$ distance and 2-Wasserstein distance, we aim to obtain a third stability estimate with $d(\rho,\rho^*)$ given by the interaction energy itself for the special case of Newtonian potential. Note that
when $\mathcal{N}$ is the Newtonian potential in $\mathbb{R}^n$, $\mathcal{E}_\mathcal{N}$ is positive definite in the sense that for any $ f\in L^1(\mathbb{R}^n) \cap L^\infty(\mathbb{R}^n)$ (which can be sign-changing), one has
\begin{equation}\label{eq_newtonian}
\mathcal{E}_\mathcal{N}[f] = \int_{\mathbb{R}^n} f (-\Delta)^{-1}f dx 
= \|f\|_{\dot H^{-1}}^2 \geq 0.
\end{equation}
Motivated by the positive definiteness of $\mathcal{E}_\mathcal{N}$, Yan Guo conjectured (see \cite[Eq.(3)]{BC}) that whether the following inequality holds for all $\rho \in L^1_+(\mathbb{R}^n)\cap L^\infty(\mathbb{R}^n)$:
\begin{equation}\label{conj_guo}
\mathcal{E}_\mathcal{N}[\rho^*] - \mathcal{E}_\mathcal{N}[\rho] \overset{?}{\geq} c(n) \inf_a \mathcal{E}_\mathcal{N}[T_a \rho - \rho^*].
\end{equation}
Note that no normalization is required as we scale or dilate $\rho$, because both sides scale in the same way. To our best knowledge, the answer to the conjecture remained open.  (See \cite[Theorem 1]{BG} for some related results on a sequence of functions without a quantitative estimate.) Another goal of our paper is to prove that \eqref{conj_guo} would be correct if we replace $c(n)$ by $c(n, \|\rho\|_{L^1}, \|\rho\|_{L^\infty}, R)$, where $R$ is the smallest number such that $\text{supp}\,\rho\subset B(0,R)$. We will also construct counterexamples to show that the dependence on $\|\rho\|_{L^1}, \|\rho\|_{L^\infty}$ and $R$ is indeed necessary for $n\geq 3$.

\subsection{Our results}

Throughout this paper, we assume that $W$ satisfies the following assumptions:

\noindent \textbf{(W1)} $W\in C^1(\mathbb{R}^n\setminus\{0\}) \cap L^1_{loc}(\mathbb{R}^n)$ is radially symmetric with $W(x)=w(|x|)$ for some $w:\mathbb{R}^+ \to \mathbb{R}$.

\noindent \textbf{(W2)} $w'(r)<0$ for $r>0$. 

\noindent \textbf{(W3)}  $W$ is not too flat near the origin: Namely, there exists some $c>0$, such that $w'(r)\leq -cr$ for $r\in(0,1)$.

In particular, note that for $k\in (-n,2]$, the power-law potentials $W_k$ given by \eqref{def_riesz} satisfy all the above assumptions. However, $W_k$ with $k>2$ violates \textbf{(W3)}, due to being too flat near the origin.

Below we state our results. The first main result improves \eqref{fp} to general densities $\rho\in L_+^1(\mathbb{R}^n)\cap L^\infty(\mathbb{R}^n)$, using a completely different (and in fact more elementary) proof from those in \cite{BC, BC2, FL, FP}.
\begin{theorem}\label{theorem1}
Assume that $W$ satisfies \textbf{\textup{(W1)}}--\textbf{\textup{(W3)}}. Let $\rho\in L_+^1(\mathbb{R}^n)\cap L^\infty(\mathbb{R}^n)$, with $\supp \rho^* \subset B(0,R_*)$ for some finite $R_*>0$. Then we have the following stability estimate for $n\geq 2$:
\begin{equation}\label{thm1_eq1}
 \mathcal{E}_W[\rho^*] - \mathcal{E}_W[\rho] \geq c(n,W, R_*) \|\rho\|_1^{2+\frac{2}{n}} \|\rho\|_\infty^{-\frac{2}{n}}\delta(\rho)^2.
\end{equation}
In particular, for power-law potentials $W_k$ with $k\in(-n,2]$, the constant is 
$
 c(n,W_k, R_*) = c(n) R_*^{k-2}.$

For $n=1$, \eqref{thm1_eq1} also holds if $\rho = c 1_D$ is a multiple of a characteristic function. If $n=1$ and $\rho$ is a general density in $L_+^1(\mathbb{R})\cap L^\infty(\mathbb{R})$, then \eqref{thm1_eq1} holds with $\delta(\rho)^2$ replaced by $\delta(\rho)^3$.

\end{theorem}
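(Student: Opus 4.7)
My plan is to reduce the inequality for a general density $\rho$ to its characteristic-function analogue through a layer-cake decomposition, and then aggregate per-layer contributions.

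\textbf{Step 1: Characteristic-function stability.}  I would first establish that for every measurable $D$ with $D^*\subset B(0,R_*)$,
\[
  \mathcal{E}_W[1_{D^*}] - \mathcal{E}_W[1_D] \;\geq\; c(n,W,R_*)\,|D|^{2+2/n}\,\delta(1_D)^2.
\]
This extends the sharp results of Fusco--Pratelli, Frank--Lieb, and Burchard--Chambers from power-law potentials to potentials satisfying (W1)--(W3).  The argument is a Fusco--Pratelli-style mass transport on the symmetric difference $D\triangle D^* = (D\setminus D^*)\sqcup (D^*\setminus D)$, assigning to each $x\in D\setminus D^*$ a target $T(x)\in D^*\setminus D$ of equal mass and writing the energy gap as an integral of $W(T(x)-y)-W(x-y)$ tested against $1_D(y)$.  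The key input is the quantitative monotonicity $-w'(r)\geq cr$ near $0$ supplied by (W3), which produces the desired quadratic gain; the $R_*$-dependence enters when bounding far-field transport distances by the diameter of the support.

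\textbf{Step 2: Layer-cake decomposition.}  After translating $\rho$ so that $\|\rho-\rho^*\|_1 = 2\|\rho\|_1\delta(\rho)$, slice by superlevel sets $L_t=\{\rho>t\}$, $M=\|\rho\|_\infty$, obtaining
\[
  \mathcal{E}_W[\rho^*] - \mathcal{E}_W[\rho] \;=\; \int_0^M\!\!\int_0^M \Bigl[\mathcal{B}_W(1_{L_s^*},1_{L_t^*}) - \mathcal{B}_W(1_{L_s},1_{L_t})\Bigr]\,ds\,dt,
\]
with each integrand nonnegative by Riesz.  In parallel, the $L^1$-asymmetry splits as $2\|\rho\|_1\delta(\rho)=\int_0^M |L_t\triangle L_t^*|\,dt$, and $|L_t\triangle L_t^*|\geq 2|L_t|\delta(1_{L_t})$, giving the key control $\int_0^M |L_t|\delta(1_{L_t})\,dt \leq \|\rho\|_1\delta(\rho)$.

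\textbf{Step 3: Aggregation.}  I would bound the layer-cake integrand from below by extracting a ``diagonal'' piece $\mathcal{E}_W[1_{L_{\max(s,t)}^*}]-\mathcal{E}_W[1_{L_{\max(s,t)}}]$ plus an off-diagonal piece encoding the interaction between $L_{\min}$ and the annular shell $L_{\max}\setminus L_{\min}$.  Inserting the Step~1 estimate into the diagonal piece produces a lower bound of the form $c\int_0^M |L_t|^{2+2/n}\delta(1_{L_t})^2\,w(t)\,dt$ for a suitable weight; applying Cauchy--Schwarz against $|L_t|^{1+1/n}$ and using $|L_t|\leq |B_{R_*}|$ together with the $L^1$ decomposition above converts this into $c\|\rho\|_1^{2+2/n}\|\rho\|_\infty^{-2/n}\delta(\rho)^2$, with the $\|\rho\|_\infty^{-2/n}$ factor arising from the length $M$ of the $t$-integration in the Cauchy--Schwarz step.

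\textbf{Main obstacle.}  The delicate point is the off-diagonal contribution.  A ``sliding ball'' density, whose every super-level set $L_t$ is itself a translate of a ball (so $\delta(1_{L_t})=0$ for every $t$) while the centers drift with $t$ and $\delta(\rho)>0$, shows that the diagonal piece alone cannot give a positive bound.  All of the quantitative gap must then come from cross-interactions between nested but non-concentric level sets, which requires a two-set version of the Step~1 mass transport argument: one must show that the displacement between the centers of $L_s^*$ and $L_t^*$ (in the original, untranslated copies) contributes quadratically to the off-diagonal cell $\mathcal{B}_W(1_{L_s^*},1_{L_t^*})-\mathcal{B}_W(1_{L_s},1_{L_t})$ via (W3), and then to relate the aggregate of these displacements to $\delta(\rho)^2$.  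This off-diagonal step is where dimension enters: in $n\geq 2$ the geometric rigidity of nested non-concentric balls is strong enough to preserve the exponent $2$ on $\delta(\rho)$, whereas in $n=1$ the ``sliding interval'' analog degenerates and the aggregation costs an extra factor of $\delta(\rho)$, yielding the weaker exponent $3$ asserted in the one-dimensional statement.
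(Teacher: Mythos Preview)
You have correctly identified the main obstacle, but your proposed resolution is where the genuine gap lies. The ``key control'' in Step~2, $\int_0^M |L_t|\,\delta(1_{L_t})\,dt \leq \|\rho\|_1\delta(\rho)$, is an \emph{upper} bound on the aggregated per-layer asymmetries and therefore cannot be used to bound the energy gap from below by $\delta(\rho)^2$; indeed for your sliding-ball example each $\delta(1_{L_t})=0$, so the diagonal contribution from Step~1 vanishes entirely and the whole content of the proof must come from the off-diagonal cells $\mathcal{B}_W(1_{L_s^*},1_{L_t^*})-\mathcal{B}_W(1_{L_s},1_{L_t})$. Extracting a sharp quadratic lower bound from these amounts to a quantitative \emph{bilinear} Riesz stability for two different characteristic functions, which is substantially harder than the Step~1 result you invoke (the only such result in the literature, due to Carlen--Maggi, yields exponent $8(n+2)$ rather than $2$), and you have not indicated how the aggregate of center displacements would be tied back to $\delta(\rho)^2$. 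A secondary gap is that the hypothesis is only $\supp\rho^*\subset B(0,R_*)$, not $\supp\rho$; a separate argument is needed to localize $\rho$.

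The paper bypasses the off-diagonal obstruction entirely via a different decomposition. Under (W1)--(W3) one writes $W = -\tfrac{c_{W,R}}{2}|x|^2 + \tilde W$ with $\tilde W$ still radially decreasing on a ball containing all relevant supports; Riesz disposes of the $\tilde W$ piece, and the identity $\mathcal{E}_{|x|^2}[\rho] = 2\|\rho\|_1\, M_2[T_{x_0}\rho]$ (with $x_0$ the center of mass) reduces everything to stability of the second moment $M_2[\mu]-M_2[\mu^*]$. The crucial gain is that $M_2$ is a \emph{linear} functional of $\mu$, so its layer-cake expansion has no off-diagonal terms whatsoever: one simply has $M_2[\mu]-M_2[\mu^*]=\int_0^M \big(M_2[1_{L_t}]-M_2[1_{L_t^*}]\big)\,dt$ with a single global translation already fixed, after which an elementary per-layer bound and Cauchy--Schwarz finish. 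Your approach works directly on the bilinear $\mathcal{E}_W$, which forces the off-diagonal coupling; the paper's reduction to a linear functional is precisely what makes that coupling disappear.
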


\noindent \textbf{Remarks.} 1. We do not need $\rho$ to be compactly supported, but it is indeed necessary to assume $\rho^*$ is supported in $B(0,R_*)$. For $W=W_k$ with $k\in(-n,2]$, the constant $c(n,W_k, R_*) = c(n) R_*^{k-2}$ has the sharp power on $R_*$. To see this, for any characteristic function $\rho = 1_D$, $c(n,W_k, R_*)$ becomes $c(n) |D|^{\frac{k-2}{n}}$ due to $\omega_n R_*^n = |D|$, thus
 \eqref{thm1_eq1} exactly becomes the sharp estimate \eqref{fp}.

2. For $n=1$ and a general density $\rho \in L_+^1(\mathbb{R})\cap L^\infty(\mathbb{R})$, the power of $\delta(\rho)^3$ in \eqref{thm1_eq1} is indeed sharp. We will construct examples in Remark~\ref{rmk_1d_2} to demonstrate the sharpness of power 3.

\medskip
Our second main result is also a stability result for the interaction energy. The novelty is that instead of using the $L^1$ norm to measure the ``asymmetry'' of $\rho$, we now use the 2-Wasserstein distance $W_2$, which is the natural metric to use in many studies of the interaction energy, see \cite{McCann, CMV, CMV2},  \cite[Section 5.2.5]{Villani}, and \cite[Section 10.4.5]{AGS}. Since $W_2$ is defined among probability measures, we assume $\rho$ to be a probability density below. The additional assumption $\rho\in L^\infty(\mathbb{R}^n)$ is only to ensure that $ \mathcal{E}_W[\rho^*]$ and $\mathcal{E}_W[\rho]$ are both finite, and we expect that it can be relaxed.

\begin{theorem}\label{theorem2}Assume that $W$ satisfies \textbf{\textup{(W1)}}--\textbf{\textup{(W3)}}. 
Let $\rho\in \mathcal{P}(\mathbb{R}^n) \cap L^\infty(\mathbb{R}^n)$, with $\supp\rho \subset B(0,R)$ for some finite $R>0$. Denote its center of mass by $x_0 := \int x\rho dx$.   Then 
\begin{equation}\label{eq_w2}
 \mathcal{E}_W[\rho^*] - \mathcal{E}_W[\rho] \geq c(W, R)  \,W_2^2(T_{x_0}\rho, \rho^*).
\end{equation}
In particular, if $W=W_k$ with $k\in(-n,2]$, the constant is given by $c(W_k, R) = (2R)^{k-2}$.
\end{theorem}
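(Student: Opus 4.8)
The plan is to reduce the Wasserstein estimate to the $L^1$ estimate already established in Theorem~\ref{theorem1}, by controlling $W_2^2(T_{x_0}\rho,\rho^*)$ from above by a suitable power of $\delta(\rho)$. First I would recall the elementary interpolation bound for probability densities supported in a common ball $B(0,\tilde R)$: for any two such densities $\mu,\nu$, one has $W_2^2(\mu,\nu)\le C(\tilde R)\,\|\mu-\nu\|_{L^1}$, which follows by using (say) the trivial transport plan that keeps the common mass $\min(\mu,\nu)$ fixed and moves the defect mass $\tfrac12\|\mu-\nu\|_{L^1}$ an arbitrary distance $\le 2\tilde R$ inside the ball. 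Here $\rho^*\subset B(0,R_*)$ with $\omega_n R_*^n=\|\rho\|_{L^1}=1$, and $\rho\subset B(0,R)$; after translating by the center of mass $x_0$, which satisfies $|x_0|\le R$, the density $T_{x_0}\rho$ is supported in $B(0,2R)$, and of course $R_*\le R$ since $|B(0,R_*)|=1=\|\rho\|_{L^1}\le |B(0,R)|\cdot\|\rho\|_\infty$... more carefully, $R_*$ is comparable to $1$ while $R$ can be large, so both supports sit in $B(0,CR)$ for a dimensional constant (and $R\gtrsim$ a constant). Thus $W_2^2(T_{x_0}\rho,\rho^*)\le C(R)\,\|T_{x_0}\rho-\rho^*\|_{L^1}$.

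The second ingredient is that $\|T_{x_0}\rho-\rho^*\|_{L^1}$ is itself controlled by $\delta(\rho)$: by definition $\delta(\rho)=\inf_a\|T_a\rho-\rho^*\|_{L^1}/2$ (as $\|\rho\|_{L^1}=1$), so trivially $\|T_{x_0}\rho-\rho^*\|_{L^1}\ge 2\delta(\rho)$, but I need the reverse. This is where one must be slightly careful: the infimum in $\delta(\rho)$ need not be attained at $a=x_0$. However, one can argue that any near-optimal translation $a$ must have $|a-x_0|$ bounded: if $T_a\rho$ is $L^1$-close to the centered radial profile $\rho^*$, then the center of mass of $T_a\rho$ (which is $x_0-a$) must be close to $0$, quantitatively $|x_0-a|\le C(R)\,\|T_a\rho-\rho^*\|_{L^1}$ using that $\rho^*$ has center of mass $0$ and both are supported in a fixed ball. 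Then $\|T_{x_0}\rho-\rho^*\|_{L^1}\le \|T_{x_0}\rho-T_a\rho\|_{L^1}+\|T_a\rho-\rho^*\|_{L^1}$, and the first term is bounded using $\|T_h g-g\|_{L^1}\le |h|\cdot\|\nabla g\|_{L^1}$ — but $\rho$ is merely $L^\infty$, not $W^{1,1}$, so instead I would bound $\|T_{x_0}\rho-T_a\rho\|_{L^1}$ by comparing both to $\rho^*$ through a more robust argument, or simply absorb: take $a$ to be $\varepsilon$-optimal, conclude $\|T_{x_0}\rho-\rho^*\|_{L^1}\le C(R)\delta(\rho)$ by a direct geometric estimate on how much an $L^1$-small, support-bounded perturbation can shift the barycenter. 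Combining the two ingredients gives $W_2^2(T_{x_0}\rho,\rho^*)\le C(R)\,\delta(\rho)$, and plugging into \eqref{thm1_eq1} with $\|\rho\|_{L^1}=1$ yields
\[
\mathcal{E}_W[\rho^*]-\mathcal{E}_W[\rho]\ \ge\ c(n,W,R_*)\,\|\rho\|_\infty^{-2/n}\,\delta(\rho)^2\ \ge\ c'(W,R)\,\delta(\rho)^2.
\]
This only gives $\delta(\rho)^2$, not $W_2^2\lesssim\delta(\rho)$, so a naive chain loses a power. I expect the correct route is instead to run the argument behind Theorem~\ref{theorem1} directly with the Wasserstein distance in place of the $L^1$ asymmetry, or — more likely, given the clean constant $(2R)^{k-2}$ claimed — to prove \eqref{eq_w2} from scratch via a layer-cake / bathtub decomposition: write the energy defect as an integral over levels of the Fuglede-type one-dimensional comparisons, and at each level compare the transport cost to the energy gain using assumption \textbf{(W3)} in the form $w'(r)\le -cr$, which precisely converts an $L^2$-type (hence Wasserstein) displacement into an energy lower bound with the stated power of $R$.

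The main obstacle, as the discussion above signals, is that a soft reduction "Wasserstein $\le$ $L^1$ $\le$ $\delta$" gives the wrong exponent ($\delta^2$ when one wants to control $W_2^2$, which can be as small as $\delta$), so the proof cannot simply cite Theorem~\ref{theorem1} as a black box. The real work will be in redoing the core estimate in a Wasserstein-adapted way: one must track the transport cost level-by-level in the rearrangement and show it is dominated by the per-level energy gain, the crucial quantitative input being \textbf{(W3)}, which ensures the potential is at least quadratically steep near the origin and hence the energy "sees" squared displacements. Getting the sharp constant $(2R)^{k-2}$ will require carefully bounding $w'$ on the relevant range of radii (at most $2R$ apart, since everything lives in $B(0,R)$ after centering) by its value at scale $2R$.
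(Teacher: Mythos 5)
You correctly diagnose that the soft chain ``$W_2^2\lesssim \|T_{x_0}\rho-\rho^*\|_{L^1}\lesssim\delta(\rho)$, then cite Theorem~\ref{theorem1}'' loses a power and cannot yield \eqref{eq_w2} (indeed $W_2^2(T_{x_0}\rho,\rho^*)$ can be of order $\delta(\rho)$ rather than $\delta(\rho)^2$), and your instinct that one must rerun the core argument in a Wasserstein-adapted, layer-cake fashion with \textbf{(W3)} supplying the quadratic steepness is pointing in the right direction. But the proposal stops at that diagnosis: the actual proof is left as a program, and the two concrete ingredients it would need are both missing. First, the conversion of the energy gap into a displacement bound is not done level-by-level against a ``Fuglede-type'' comparison; it is done once and for all by the decomposition $W(x)=\tilde W(x)-\tfrac{c_{W,R}}{2}|x|^2$ with $c_{W,R}=\inf_{r\in(0,2R)}\bigl(-w'(r)/r\bigr)$, where $\tilde W$ is radially decreasing on $B(0,2R)$ (so Riesz's inequality disposes of it) and the quadratic part contributes exactly $c_{W,R}\|\rho\|_1\bigl(M_2[T_{x_0}\rho]-M_2[\rho^*]\bigr)$ via the identity $\mathcal{E}_{|x|^2}[\rho]=2\|\rho\|_1M_2[T_{x_0}\rho]$; this is where \textbf{(W3)} and the constant $(2R)^{k-2}$ enter, and it reduces the theorem to a statement about second moments only.

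Second, and this is the crux you have not addressed, one then needs the sharp estimate $M_2[\rho]-M_2[\rho^*]\ge W_2^2(\rho,\rho^*)$ for all $\rho\in\mathcal{P}_2(\mathbb{R}^n)$. The paper proves this first for characteristic functions: taking the optimal map $T$ with $T\#1_{D^*}=1_D$ and the displacement interpolation $\mu_t=((1-t)\,id+tT)\#1_{D^*}$, one shows $\|\mu_t\|_\infty\le1$ (convexity of $t\mapsto\int\mu_t^p$), invokes the bathtub principle to get $M_2[\mu_t]\ge M_2[1_{D^*}]$, deduces $\tfrac{d^+}{dt}M_2[\mu_t]\big|_{t=0}\ge0$, i.e.\ the cross term $\int_{D^*}x\cdot(T(x)-x)\,dx$ is nonnegative, and reads off the claim at $t=1$. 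The general case then follows by gluing the per-level optimal plans $\gamma_h$ for the superlevel sets $D_h$ into a (suboptimal) plan for $(\rho^*,\rho)$ via the layer-cake formula. Without some version of this argument --- in particular the sign of the first variation along the displacement interpolation, which is what makes the constant $1$ (and hence $(2R)^{k-2}$) come out clean --- the proposal does not constitute a proof.
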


\noindent \textbf{Remarks.} 1. Note that the power 2 on the right hand side of \eqref{eq_w2} is sharp: for $0<\epsilon\ll 1$, let $\rho = c(1_{B(0,1)} + 1_{2\leq |x|\leq 2+\epsilon})$, where $c$ is the normalizing constant so that $\int_{\mathbb{R}^n} \rho dx = 1$. As $\epsilon\to 0^+$, one can easily check that $W_2(\rho,\rho^*) \stackrel{n}{\sim} \sqrt{\epsilon}$ and $\mathcal{E}_W[\rho^*] - \mathcal{E}_W[\rho] \stackrel{n,W}{\sim} \epsilon$, thus the power cannot be lowered.

2. In Remark~\ref{rmk_w2}, we show that for potentials $W_k$ with $k\in(-n,2]$, the power $k-2$ in $c(W_k,R)=(2R)^{k-2}$ is also sharp. In addition,  it is necessary to allow $c(W,R)$ to depend on $R$; in particular one cannot replace it by the radius of $\supp\rho^*$ as we did in Theorem~\ref{theorem1}.

\medskip
In our third result we focus on the Newtonian potential $\mathcal{N}$ in $\mathbb{R}^n$, and address Guo's conjecture \eqref{conj_guo}. We first give a positive result, showing that the conjecture is true if we allow the constant on the right hand side of \eqref{conj_guo} to depend on the $L^1, L^\infty$ norm and the support radius of $\rho$.

\begin{theorem}\label{theorem3} Let $\mathcal{N}$ be the Newtonian potential in $\mathbb{R}^n$.
Let $\rho\in L_1^+(\mathbb{R}^n)\cap L^\infty(\mathbb{R}^n)$, with  $\supp\rho \subset B(0,R)$. Denote its center of mass by $x_0 := \frac{\int x\rho dx}{\|\rho\|_1}$.  Then there exists some constant $c(n)>0$ only depending on $n$, such that the following holds:
\begin{equation}\label{eq_thm3}
 \mathcal{E}_\mathcal{N}[\rho^*] - \mathcal{E}_\mathcal{N}[\rho] \geq \frac{c(n)\|\rho\|_{1}}{\|\rho\|_{\infty} R^n} \, \mathcal{E}_\mathcal{N}[T_{x_0}\rho - \rho^*].
\end{equation}
\end{theorem}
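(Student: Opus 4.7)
The plan is to obtain Theorem~\ref{theorem3} by chaining Theorem~\ref{theorem2} with Loeper's classical inequality: for probability densities $\mu,\nu$ on $\mathbb{R}^n$ with $\max(\|\mu\|_\infty,\|\nu\|_\infty)\leq M$, one has
\[
\|\mu-\nu\|_{\dot H^{-1}}^2 \leq M\, W_2^2(\mu,\nu).
\]
Combined with the identity $\mathcal{E}_\mathcal{N}[f]=\|f\|_{\dot H^{-1}}^2$ from \eqref{eq_newtonian}, this directly converts the $2$-Wasserstein lower bound of Theorem~\ref{theorem2} into the $\mathcal{E}_\mathcal{N}$ lower bound required by \eqref{eq_thm3}.

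Concretely, I first reduce to the probability case by setting $\tilde\rho := \rho/\|\rho\|_1$, so that $\tilde\rho\in\mathcal{P}(\mathbb{R}^n)\cap L^\infty(\mathbb{R}^n)$ with $\supp\tilde\rho\subset B(0,R)$, $\|\tilde\rho\|_\infty=\|\rho\|_\infty/\|\rho\|_1$, and $\int x\,\tilde\rho\,dx = x_0$. Since the Newtonian potential $\mathcal{N}$ is a positive multiple of $W_k$ with $k=2-n$ (for $n\geq 3$), of $W_0$ (for $n=2$), or of $W_1$ (for $n=1$), all in the admissible range $k\in(-n,2]$ of Theorem~\ref{theorem2}, that theorem applied to $\tilde\rho$ yields
\[
\mathcal{E}_\mathcal{N}[\tilde\rho^*]-\mathcal{E}_\mathcal{N}[\tilde\rho]\geq c(n)R^{-n}\,W_2^2(T_{x_0}\tilde\rho,\tilde\rho^*).
\]
Loeper's inequality applied to $\mu=T_{x_0}\tilde\rho$ and $\nu=\tilde\rho^*$ (both with $L^\infty$ norm equal to $\|\tilde\rho\|_\infty$) gives
\[
W_2^2(T_{x_0}\tilde\rho,\tilde\rho^*)\geq \|\tilde\rho\|_\infty^{-1}\,\mathcal{E}_\mathcal{N}[T_{x_0}\tilde\rho-\tilde\rho^*].
\]
Chaining the two estimates and undoing the scaling (both $\mathcal{E}_\mathcal{N}$ terms scale as $\|\rho\|_1^2$, while $\|\tilde\rho\|_\infty^{-1}=\|\rho\|_1/\|\rho\|_\infty$) produces exactly \eqref{eq_thm3} with a constant depending only on $n$.

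Because Theorem~\ref{theorem2} is already established and Loeper's inequality is a classical tool from optimal transport, no substantive analytical obstacle remains; the main care lies in bookkeeping constants, matching norms, and reconciling center-of-mass conventions between the two theorems. Should a self-contained proof of Loeper's inequality be preferred, I would include the short Benamou--Brenier argument: letting $\mu_t$ be a $W_2$-geodesic from $T_{x_0}\tilde\rho$ to $\tilde\rho^*$ with continuity equation $\partial_t\mu_t+\nabla\cdot(v_t\mu_t)=0$, one bounds
\[
\|T_{x_0}\tilde\rho-\tilde\rho^*\|_{\dot H^{-1}}\leq \int_0^1\|\nabla\cdot(v_t\mu_t)\|_{\dot H^{-1}}\,dt \leq \int_0^1\|v_t\mu_t\|_{L^2}\,dt \leq \|\tilde\rho\|_\infty^{1/2}\,W_2(T_{x_0}\tilde\rho,\tilde\rho^*),
\]
using $\|\nabla\cdot F\|_{\dot H^{-1}}\leq\|F\|_{L^2}$, the pointwise bound $\|v_t\mu_t\|_{L^2}^2\leq \|\mu_t\|_\infty\int v_t^2\,d\mu_t$, and the standard fact that the $L^\infty$ bound on the density is preserved along displacement interpolation between two densities each bounded by the same constant.
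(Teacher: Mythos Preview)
Your proposal is correct and follows essentially the same route as the paper: normalize $\rho$ to a probability density, apply Theorem~\ref{theorem2} (using that $\mathcal{N}$ is a positive multiple of $W_{2-n}$, so the constant is $c(n)R^{-n}$), then invoke Loeper's inequality (stated in the paper as Proposition~\ref{prop_loeper}) to convert the $W_2^2$ lower bound into an $\mathcal{E}_\mathcal{N}=\|\cdot\|_{\dot H^{-1}}^2$ lower bound, and finally undo the scaling. The bookkeeping you outline matches the paper's exactly.
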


\begin{remark*}
Due to the relationship  \eqref{eq_newtonian} between the $\mathcal{E}_\mathcal{N}$ and the $\dot H^{-1}$ norm,  Theorem~\ref{theorem3} immediately leads to the following stability estimate of the $\dot H^{-1}$ norm:
\[
\|\rho^*\|_{\dot H^{-1}}^2 - \|\rho\|_{\dot H^{-1}}^2 \geq \frac{c(n) \|\rho\|_{1}}{\|\rho\|_{\infty} R^n} \, \|T_{x_0}\rho - \rho^*\|_{\dot H^{-1}}^2 \geq 0.
\]
\end{remark*}

One might wonder whether the dependence on $\|\rho\|_{1}, \|\rho\|_{\infty}$ and $R$ is necessary in Theorem~\ref{theorem3}. In our final result, we show that they are necessary for $n\geq 3$: we construct counterexamples in $n\geq 3$ with $\|\rho\|_1 \stackrel{n}{\sim} 1$, $R \sim 1$ and $\|\rho\|_\infty \gg 1$, showing that \eqref{conj_guo} is false if the constant on the right hand side is not allowed to depend on $\|\rho\|_\infty$. (Thus clearly the constant in \eqref{eq_thm3} also needs to depend on $\|\rho\|_\infty$, due to $\mathcal{E}_\mathcal{N}[T_{x_0}\rho - \rho^*] \geq \inf_a \mathcal{E}_\mathcal{N}[T_{a}\rho - \rho^*]$.)

\begin{theorem}\label{theorem4}
Assume $n\geq 3$. For any $\epsilon\in(0,1)$ that is sufficiently small, there exists some $\rho \in L^1_+(\mathbb{R}^n) \cap L^\infty(\mathbb{R}^n)$ supported in $B(0,7)$ with $\omega_n \leq \|\rho\|_{1} \leq 2\omega_n$ and $\|\rho\|_{\infty} = \epsilon^{-(\frac{n}{2}+1)}$, such that
\begin{equation}\label{eq_counterexample}
0<\mathcal{E}_{\mathcal{N}}[\rho^*] - \mathcal{E}_{\mathcal{N}}[\rho] < C(n) \epsilon^{\frac{n}{2}-1} \inf_a \mathcal{E}_\mathcal{N}[T_a \rho - \rho^*],
\end{equation}
where $C(n)$ only depends on $n$. Here $\omega_n$ is the volume of unit ball in $\mathbb{R}^n$.
\end{theorem}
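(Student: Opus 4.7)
The plan is to take the explicit counterexample
\[
\rho := 1_{B(0,1)} + M\cdot 1_{B(y_0,\epsilon)}, \qquad M:=\epsilon^{-(n/2+1)},
\]
with $y_0\in\mathbb{R}^n$ fixed such that $|y_0|=5$. Since $n\geq 3$, routine bookkeeping gives $\|\rho\|_\infty = M$, $\supp\rho\subset B(0,7)$, and $\|\rho\|_1 = \omega_n(1+\epsilon^{n/2-1}) \in [\omega_n,2\omega_n]$ for $\epsilon$ small. Because the two pieces of $\rho$ have disjoint supports and heights $M>1$, inspection of level sets yields
\[
\rho^* = 1_{B(0,R)} + (M-1)\cdot 1_{B(0,\epsilon)}, \qquad R := (1+\epsilon^n)^{1/n}.
\]

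To upper-bound the LHS I would expand $\mathcal{E}_\mathcal{N}[\rho]$ and $\mathcal{E}_\mathcal{N}[\rho^*]$ into self- and mutual-energies of their two pieces, using the explicit form of $\mathcal{N}\ast 1_{B(c,r)}$ (equal to $\frac{R^2}{2(n-2)}-\frac{|x-c|^2}{2n}$ inside $B(c,r)$ and $\frac{r^n}{n(n-2)|x-c|^{n-2}}$ outside, by Newton's theorem). The big-ball self-energies differ by $O(\epsilon^n)$ and the spike self-energies by $O(M\epsilon^{n+2})=O(\epsilon^{n/2+1})$; the dominant contribution is the difference of the ball--spike cross energies, yielding
\[
\mathcal{E}_\mathcal{N}[\rho^*] - \mathcal{E}_\mathcal{N}[\rho] = \frac{\omega_n\epsilon^{n/2-1}}{n-2}\Bigl(1-\frac{2}{n|y_0|^{n-2}}\Bigr) + o(\epsilon^{n/2-1}),
\]
which is strictly positive and $\leq C_1(n)\epsilon^{n/2-1}$.

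For the RHS, decompose $T_a\rho-\rho^* = G_a + S_a$ with $G_a:=1_{B(-a,1)}-1_{B(0,R)}$ and $S_a := M\cdot 1_{B(y_0-a,\epsilon)} - (M-1)\cdot 1_{B(0,\epsilon)}$, so that $\mathcal{E}_\mathcal{N}[T_a\rho-\rho^*] = \mathcal{E}_\mathcal{N}[G_a]+\mathcal{E}_\mathcal{N}[S_a]+2\langle G_a,\mathcal{N}\ast S_a\rangle$. The cross term is small uniformly in $a$: by symmetry $\langle G_a,\mathcal{N}\ast S_a\rangle = \langle S_a,\mathcal{N}\ast G_a\rangle$, and the explicit maxima of $\mathcal{N}\ast 1_{B(c,r)}$ give $\|\mathcal{N}\ast G_a\|_{L^\infty}\leq \frac{1+R^2}{2(n-2)} = O(1)$, while $\|S_a\|_{L^1}\leq 2M\omega_n\epsilon^n = 2\omega_n\epsilon^{n/2-1}$; hence $|\langle G_a,\mathcal{N}\ast S_a\rangle| = O(\epsilon^{n/2-1})$. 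To lower-bound the positive part I use a dichotomy. In Case A ($|y_0-a|\geq 2\epsilon$) the two spike-balls in $S_a$ are disjoint, so Newton gives $\langle 1_{B(c_1,\epsilon)},\mathcal{N}\ast 1_{B(c_2,\epsilon)}\rangle = \frac{\omega_n\epsilon^{2n}}{n(n-2)|c_1-c_2|^{n-2}}$, and using $M^2\epsilon^{n+2}=1$ one finds
\[
\mathcal{E}_\mathcal{N}[S_a] \geq 2\omega_n\Bigl[\frac{2}{(n-2)(n+2)} - \frac{1}{n(n-2)\,2^{n-2}}\Bigr] =: c_n > 0
\]
(positive since $2n\cdot 2^{n-1} > n+2$ for $n\geq 3$). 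In Case B ($|y_0-a|<2\epsilon$) we must have $|a|\geq|y_0|-2\epsilon\geq 4 > 1+R$ for $\epsilon$ small, so $B(-a,1)$ and $B(0,R)$ are disjoint and Newton gives
\[
\mathcal{E}_\mathcal{N}[G_a] = (1+R^{n+2})\,\omega_n K_n - \frac{2\omega_n R^n}{n(n-2)|a|^{n-2}} \geq 2\omega_n\Bigl[K_n - \frac{1}{n(n-2)\,4^{n-2}}\Bigr] =: c'_n > 0,
\]
with $K_n:=\frac{2}{(n-2)(n+2)}$.

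Combining the three terms in either case yields $\mathcal{E}_\mathcal{N}[T_a\rho-\rho^*]\geq \min(c_n,c'_n)-O(\epsilon^{n/2-1}) \geq c_2(n)/2$ for $\epsilon$ small, uniformly in $a$; together with the LHS estimate this gives \eqref{eq_counterexample} with $C(n)=2C_1(n)/c_2(n)$. The main obstacle is this uniform-in-$a$ lower bound, since neither $\mathcal{E}_\mathcal{N}[G_a]$ nor $\mathcal{E}_\mathcal{N}[S_a]$ alone is bounded below (they degenerate as $a\to 0$ and $a\to y_0$ respectively), forcing one to cover $a$-space by two complementary regimes and extract a different positive term in each; the key reason the cross term never spoils the argument is that the total spike mass $M\omega_n\epsilon^n = O(\epsilon^{n/2-1})$ is small even though the amplitude $M$ blows up.
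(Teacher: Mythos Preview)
Your proof is correct and follows essentially the same approach as the paper: the same construction $\rho = 1_{B(0,1)} + \epsilon^{-(n/2+1)}1_{B(y_0,\epsilon)}$, the same decomposition of $T_a\rho-\rho^*$ into a ball part and a spike part with a uniformly $O(\epsilon^{n/2-1})$ cross term, and the same dichotomy on $a$ to guarantee that one of the two self-energies is bounded below by a dimensional constant. The only minor variation is that you split at $|y_0-a|=2\epsilon$ and compute everything via Newton's explicit formulas, whereas the paper writes $\rho^*=\rho_1+\rho_2+g$ with a remainder $g$, splits symmetrically via $|a|\geq 3$ versus $|a+p|\geq 3$, and argues more qualitatively; both routes work.
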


Once the dependence on $\|\rho\|_{\infty}$ is necessary, one can simply dilate and scale $\rho$ to check that the dependence on $\|\rho\|_{1}$ and $R$ is also necessary  in both the conjecture \eqref{conj_guo} and Theorem~\ref{theorem3}, although it is unclear whether the powers of $\|\rho\|_\infty, \|\rho\|_1$ and $R$  in \eqref{eq_thm3} are sharp.

\subsection{Strategy of proof}
\label{sec_strategy}
In all the stability theorems, we make a simple but useful observation that \emph{if} $\rho$ is known to be supported in $B(0,R)$,\footnote{For Theorem \ref{theorem2}--\ref{theorem3}, $\supp\rho\subset B(0,R)$ is already part of the assumption. For Theorem~\ref{theorem1}, although $\rho$ is not assumed to have compact support (recall that we only assume $\supp\rho^*\subset B(0,R_*)$), we will show that the proof can be reduced to the case where $\supp \rho \subset B(0,R)$ for $R=20R_*$.  
} it suffices to obtain stability estimates for a quadratic interaction potential. Namely, the assumptions on $W$ allow us to decompose it as
\[
W(x) = -c(R,W) |x|^2 + \tilde W(x),
\]
where $c(R,W)>0$ and $\tilde W$ is radially decreasing in $B(0,2R)$. Figure~\ref{fig1} shows an illustration of the decomposition, and the explicit choice of $c$ will be given in Section~\ref{sec_quadratic}. This immediately leads to
\[
\mathcal{E}_W[\rho^*] - \mathcal{E}_W[\rho] = c(R,W)\left(\mathcal{E}_{-|x|^2} [\rho^*] - \mathcal{E}_{-|x|^2}[\rho]\right) + \left( \mathcal{E}_{\tilde W}[\rho^*] - \mathcal{E}_{\tilde W}[\rho]\right), 
\]where both parentheses are nonnegative since $\supp \rho \subset B(0,R)$.

\begin{figure}[h!]
\begin{center}
\includegraphics[scale=1]{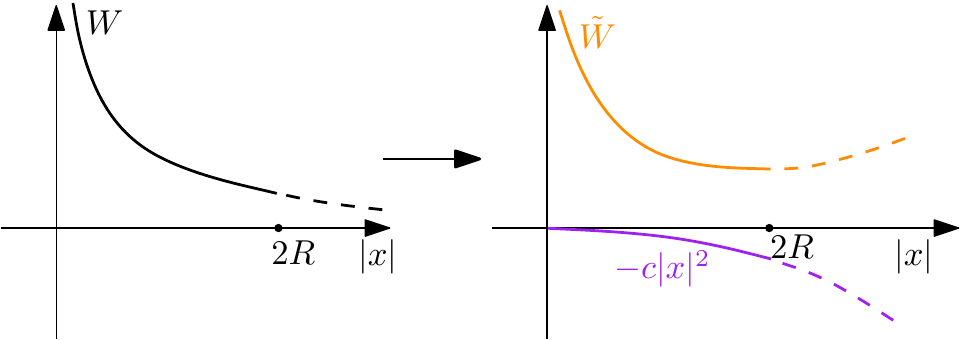}
\caption{Decomposing $W$ into the sum of a (negative) quadratic potential and a potential $\tilde W$ that is radially decreasing in $B(0,2R)$. As illustrated by the dashed orange line, $\tilde W$ is not necessarily radially decreasing outside radius $2R$. Note that $\rho$ and $\rho^*$ cannot ``feel'' the dashed line part of $\tilde W$, since both are supported in $B(0,R)$. \label{fig1}}
\end{center}
\end{figure}

The motivation for us to do this decomposition is that the quadratic interaction potential $|x|^2$ has a very special property  (see \cite{CFT, LM} for example): if $\rho$ has a finite second moment $M_2[\rho] := \int_{\mathbb{R}^n} \rho|x|^2 dx<\infty$, and center of mass $x_0 := \frac{\int \rho x dx}{\|\rho\|_{1}}$ (so its translation $T_{x_0}\rho$ has center of mass at the origin), then
\begin{equation}\label{eq_second_moment}
\mathcal{E}_{|x|^2} [\rho] = 2\|\rho\|_{1} M_2[T_{x_0}\rho].
\end{equation}
Therefore, it suffices to obtain lower bounds of 
$M_2[T_{x_0}\rho] - M_2[\rho^*]$.
Since the second moment $M_2$ is a linear functional, it is much easier to deal with $M_2$ compared to the original interaction energy. We then obtain Theorem~\ref{theorem1} by applying a stability estimate for $M_2$ in terms of $\delta(\rho)$.

In order to prove Theorem~\ref{theorem2}, using the above observation, it remains to relate the second moment difference with the 2-Wasserstein distance. We prove the following sharp stability estimate of the second moment for all $\rho\in\mathcal{P}_2(\mathbb{R}^n)$, which might be of independent interest:
\[
M_2[\rho] - M_2[\rho^*] \geq W_2^2(\rho, \rho^*),
\]
where both the power 2 and the constant 1 on the right hand side are sharp. 

Finally, once we obtain Theorem~\ref{theorem2}, using a remarkable observation by Loeper \cite{Loeper} regarding the connection between the Wasserstein distance and $\dot H^{-1}$ norm, Theorem~\ref{theorem3} follows as a direct consequence.

\subsection{Organization of the paper}
In Section \ref{sec_quadratic} we prove a simple lemma explaining the ``reduction to quadratic interaction potential'' idea in details.
Section~\ref{sec3} and \ref{sec4} are devoted to the proofs of Theorem~\ref{theorem1} and Theorem~\ref{theorem2} respectively. In Section \ref{sec5} we focus on Newtonian potential, and prove Theorem~\ref{theorem3}--\ref{theorem4}.

\subsection{Notations}
Throughout this paper, we denote $\|f\|_p := \|f\|_{L^p(\mathbb{R}^n)}$ for $1\leq p\leq \infty$. 

Let $B(a,r)$ be the ball in $\mathbb{R}^n$ centered at $a$ with radius $r$, and $\omega_n$ is the volume of the unit ball in $\mathbb{R}^n$. For a set $D\subset \mathbb{R}^n$ with finite volume (i.e. $|D|<\infty$), we denote by $D^*$ the symmetric rearrangement of $D$, i.e. $D^*$ is a ball centered at the origin with $|D^*| = |D|$.

We denote by $C(n)$ positive constants only depending on $n$, whose value may change from line to line. Likewise, $C(n,W,R)$ only depends on $C, W$ and $R$.  For two non-negative quantities $f, g$, we write $f\lesssim g$ if $f\leq Cg$ for some universal constant $C>0$. $f\gtrsim g$ is defined likewise. And we write $f\sim g$ if  both $f\lesssim g$ and $g\lesssim f$ hold. If the constant depends on other parameters (such as $n, W,\dots$), we write $f \stackrel{n}{\sim} g$, $f \stackrel{n,W}{\sim} g$.

Let $\mathcal{P}(\mathbb{R}^n)$ denote the set of probability measures in $\mathbb{R}^n$, and $\mathcal{P}_2(\mathbb{R}^n)$ denotes the set of probability measures  in $\mathbb{R}^n$ with finite second moment. (Recall that the second moment of $\rho$ is given by $M_2[\rho] := \int_{\mathbb{R}^n} \rho(x) |x|^2 dx$.) For $\rho_1, \rho_2 \in \mathcal{P}_2(\mathbb{R}^n)$, let $W_2(\rho_1, \rho_2)$ denote their $2$-Wasserstein distance; see \cite[Section 7.1]{AGS} for a definition.

For any $f\in L_+^1(\mathbb{R}^n)$ and a map $T:\mathbb{R}^n \to \mathbb{R}^n$, we define $T\# f$ as the \emph{push-forward} of $f$ under $T$, which satisfies $\int_{A} (T\#f)(x) dx = \int_{T^{-1}(A)} f(x) dx$ for all $A \subset \mathbb{R}^n$.

\subsection*{Acknowledgement}
YY was partially supported by the NSF grants DMS-1715418, DMS-1846745, and Sloan Research Fellowship. The authors would like to thank Yan Guo and Pierre-Emmanuel Jabin for helpful discussions.

\section{Reducing to quadratic interaction potential}\label{sec_quadratic}
The following simple lemma explains the ``reduction to quadratic interaction potential'' idea in Section~\ref{sec_strategy} in details. It shows that if $\supp \rho \subset B(0,R)$, in order to obtain stability estimates for the interaction energy, all we need is a stability estimate of the second moment.

\begin{lemma}\label{lem_reduce_m2}Assume that $W$ satisfies \textbf{\textup{(W1)}}--\textbf{\textup{(W3)}}. For all $\rho\in L^1_+(\mathbb{R}^n)\cap L^\infty(\mathbb{R}^n)$ with $\supp\rho \subset B(0,R)$, there exists $c_{W,R}>0$ only depending on $W$ and $R$, such that
\[
\mathcal{E}_W[\rho^*] - \mathcal{E}_W[\rho] \geq c_{W,R} \|\rho\|_1 (M_2[T_{x_0}\rho] - M_2[\rho^*]),
\]
where $x_0 = \frac{\int x\rho dx}{\|\rho\|_1}$ is the center of mass of $\rho$. In particular, if $W=W_k$ with $k\in(-n,2]$, we have $c_{W_k,R} = (2R)^{k-2}$.

\end{lemma}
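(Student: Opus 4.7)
The plan is to execute in detail the decomposition of $W$ sketched in Section~\ref{sec_strategy}. I would write
\[
W(x) = -c\,|x|^2 + \tilde{W}_0(x) \qquad \text{for } x \in B(0, 2R),
\]
where $c = c(W, R) > 0$ is to be chosen, and then extend $\tilde{W}_0$ outside $B(0, 2R)$ by a constant so that it becomes a globally radially symmetric decreasing function in $L^1_{loc}(\mathbb{R}^n)$. Since $\supp\rho \subset B(0, R)$ forces $\supp\rho^* \subset B(0, R)$ as well, the differences $x - y$ appearing in both $\mathcal{E}_W[\rho]$ and $\mathcal{E}_W[\rho^*]$ lie in $B(0, 2R)$, so the decomposition can be inserted inside the double integrals, producing
\[
\mathcal{E}_W[\rho^*] - \mathcal{E}_W[\rho] = c\bigl(\mathcal{E}_{|x|^2}[\rho] - \mathcal{E}_{|x|^2}[\rho^*]\bigr) + \bigl(\mathcal{E}_{\tilde{W}_0}[\rho^*] - \mathcal{E}_{\tilde{W}_0}[\rho]\bigr).
\]

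To choose $c$, I would set $c := \tfrac{1}{2}\inf_{r \in (0, 2R]}(-w'(r)/r)$. Assumption \textbf{(W2)} makes this quantity pointwise positive; \textbf{(W3)} provides a uniform lower bound near the origin, and continuity of $w'$ together with \textbf{(W2)} yields a positive minimum on any compact subinterval of $(0, 2R]$. Hence $c > 0$ depends only on $W$ and $R$. Setting $\tilde{w}(r) := w(r) + c r^2$, this choice ensures $\tilde{w}'(r) = w'(r) + 2cr \leq 0$ on $(0, 2R]$, so $\tilde{w}$ is non-increasing there; extending $\tilde{w}$ by its value $\tilde{w}(2R)$ for $r > 2R$ gives a globally non-increasing radial profile, so $\tilde{W}_0(x) := \tilde{w}(|x|)$ is radially symmetric decreasing on $\mathbb{R}^n$ and lies in $L^1_{loc}(\mathbb{R}^n)$. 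For the power-law case $W_k$ with $k \in (-n, 2]$, direct computation yields $-w'(r)/r = r^{k-2}$, which is non-increasing in $r$, so the infimum is attained at $r = 2R$, producing $c = (2R)^{k-2}/2$.

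To finish, I would estimate the two parenthesized terms separately. For the quadratic term, the translation invariance of $\mathcal{E}_{|x|^2}$ combined with the identity \eqref{eq_second_moment}, applied once to $\rho$ (whose center of mass is $x_0$) and once to $\rho^*$ (whose center of mass is $0$), gives
\[
\mathcal{E}_{|x|^2}[\rho] - \mathcal{E}_{|x|^2}[\rho^*] = 2\|\rho\|_1\bigl(M_2[T_{x_0}\rho] - M_2[\rho^*]\bigr).
\]
For the $\tilde{W}_0$ term, Riesz's rearrangement inequality applied to the radially symmetric decreasing $\tilde{W}_0$, together with $(\rho^*)^* = \rho^*$, yields $\mathcal{E}_{\tilde{W}_0}[\rho^*] - \mathcal{E}_{\tilde{W}_0}[\rho] \geq 0$. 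Combining these gives the lemma with $c_{W, R} = 2c$, matching the claimed value $(2R)^{k-2}$ in the power-law case. The only content beyond bookkeeping is verifying $c > 0$ in the step above; this is where \textbf{(W2)}--\textbf{(W3)} are actually consumed, and it is the main (and rather mild) obstacle I anticipate.
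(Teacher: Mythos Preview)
Your proposal is correct and follows essentially the same route as the paper: define $c_{W,R}=\inf_{r\in(0,2R)}\bigl(-w'(r)/r\bigr)$ (your $2c$), set $\tilde W=W+\tfrac{c_{W,R}}{2}|x|^2$, use that $\tilde W$ is radially decreasing in $B(0,2R)$ together with the support condition to drop that term via Riesz, and reduce the quadratic part to second moments via \eqref{eq_second_moment}. The paper's treatment of the extension of $\tilde W$ outside $B(0,2R)$ and the verification that $c_{W,R}>0$ from \textbf{(W1)}--\textbf{(W3)} match your argument in all essentials.
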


\begin{proof}
If $\rho$ is supported in $B(0,R)$,  the distance between any two points in its support does not exceed $2R$. The same is also true for $\rho^*$, since $\supp\rho^*$ is a ball with the same volume as $\supp \rho$. 

 With this in mind, we will split $W$ into the sum of a quadratic potential $-c|x|^2$ (with $c>0$) and another potential $\tilde W$ that is radially decreasing in $B(0,2R)$. 
In order to do this, let
\begin{equation}\label{c_general}
c_{W,R} := \inf_{r\in(0,2R)} \frac{-w'(r)}{r}.
\end{equation}
Note that  $c_{W,R}>0$ by the assumptions \textbf{(W1)}--\textbf{(W3)}. In particular, if $W=W_k$ with $k\in(-n,2]$, one can compute explicitly that 
\begin{equation}\label{eq:cwkr}
c_{W_k,R} = (2R)^{k-2}.
\end{equation}
With the above definition, we have that $w'(r) + c_{W,R}\,r\leq 0$ for $r\in(0,2R)$, thus  
\begin{equation}\label{def_tildeW}\tilde W(x) := W(x)+\frac{c_{W,R}}{2}|x|^2
\end{equation} is radially decreasing in $B(0,2R)$. 

We rewrite $W$ as $W = \tilde W - \frac{c_{W,R}}{2}|x|^2$, and use the linearity of $\mathcal{E}_W$ with respect to $W$ to obtain
\begin{equation}\label{temp_e}
\begin{split}
\mathcal{E}_W[\rho^*] - \mathcal{E}_W[\rho] &= (\mathcal{E}_{\tilde W}[\rho^*] - \mathcal{E}_{\tilde W}[\rho]) - \frac{c_{W,R}}{2} \Big(\mathcal{E}_{|x|^2}[\rho^*] - \mathcal{E}_{|x|^2}[\rho] \Big).
\end{split}
\end{equation}
Here the first parentheses on the right hand side is non-negative: even though $\tilde W$ is only known to be radially decreasing in $B(0,2R)$, since $\rho, \rho^*$ are both supported in $B(0,R)$, we can modify $\tilde W$ in $B(0,2R)^c$ to make it radially decreasing in $\mathbb{R}^n$ without changing the value of $\mathcal{E}_{\tilde W}[\rho], \mathcal{E}_{\tilde W}[\rho^*]$, so Riesz's rearrangement inequality yields that $\mathcal{E}_{\tilde W}[\rho^*] \geq \mathcal{E}_{\tilde W}[\rho]$.

Next we take a closer look at the second parenthesis in \eqref{temp_e}, and use the following special property of the potential $|x|^2$. 
Since the interaction energy is translational invariant, we have
\begin{equation}\label{e_quadratic}
\begin{split}
\mathcal{E}_{|x|^2}[\rho] &= \mathcal{E}_{|x|^2}[T_{x_0}\rho]\\
&= \int_{\mathbb{R}^n} \int_{\mathbb{R}^n} (T_{x_0}\rho)(x)(T_{x_0}\rho)(y) \big(|x|^2 - 2x\cdot y + |y|^2\big) dxdy\\
&= 2 \|\rho\|_1 \int_{\mathbb{R}^n} (T_{x_0}\rho)(x)|x|^2 dx  - 2 \Big|\int_{\mathbb{R}^n} (T_{x_0}\rho)(x)\, x dx\Big|^2\\
&= 2 \|\rho\|_1 M_2[T_{x_0}\rho],
\end{split}
\end{equation}
where the last equality follows from the fact that $T_{x_0}\rho = \rho(\cdot+x_0)$ has center of mass at the origin.
Note that \eqref{e_quadratic} also holds when $\rho$ is replaced by $\rho^*$ (with $x_0=0$ since $\rho^*$ is radial), so \eqref{temp_e} becomes
\[\mathcal{E}_W[\rho^*] - \mathcal{E}_W[\rho] \geq - \frac{c_{W,R}}{2}\Big(\mathcal{E}_{|x|^2}[\rho^*] - \mathcal{E}_{|x|^2}[\rho] \Big) =  c_{W,R} \|\rho\|_1 (M_2[T_{x_0}\rho] - M_2[\rho^*]),
\]finishing the proof.
\end{proof}

\section{Stability with respect to $L^1$ distance}\label{sec3}
This section is devoted to the proof of Theorem~\ref{theorem1}. As we explained in Section~\ref{sec_quadratic}, \emph{if} $\rho$ is compactly supported in some $B(0,R)$ (which is \emph{not} in the assumption of Theorem~\ref{theorem1}), it suffices to obtain the stability of the second moment. Let us first prove two stability lemmas for the second moment, one for characteristic functions and one for general densities. The proof of Theorem~\ref{theorem1} will be given after the two lemmas, and as we will see, the proof can be reduced to the compactly supported case  $\supp\rho\subset B(0,R)$ with $R=20R_*$.

We start with a preliminary lemma that gives stability of the second moment among characteristic functions. The proof follows the same idea as \cite[Lemma 1]{BC}.

  \begin{lemma}\label{lem1_1} Let $R_0>0$. For any set $D\subset \mathbb{R}^n$ with $|D|=|B(0,R_0)|$, we have
\[
 M_2[1_D] - M_2[1_{D^*}] \geq \frac{1}{2n \omega_n R_0^{n-2}}|D\Delta D^*|^2.
\]
\end{lemma}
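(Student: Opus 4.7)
The plan is to reduce the inequality to a one-dimensional calculus statement via the bathtub principle, then verify that scalar inequality by inspecting second derivatives.

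First I would rewrite the second-moment difference in a symmetric form that exposes both contributions to the asymmetry. Since $|D|=|D^*|=\omega_n R_0^n$, the identity $\int R_0^2(1_D-1_{D^*})\,dx=0$ lets me subtract that constant to get
\[
M_2[1_D]-M_2[1_{D^*}]=\int_{D\setminus D^*}(|x|^2-R_0^2)\,dx+\int_{D^*\setminus D}(R_0^2-|x|^2)\,dx,
\]
where both integrands are now non-negative. Set $s:=|D\setminus D^*|=|D^*\setminus D|=\tfrac12|D\Delta D^*|$. The bathtub principle bounds each integral below by replacing the sets with their closest-to-$\partial D^*$ rearrangements: the first integral is minimized over measurable $E\subset B(0,R_0)^c$ with $|E|=s$ by the inner annulus $B(0,r_1)\setminus B(0,R_0)$ with $r_1:=(R_0^n+s/\omega_n)^{1/n}$, and the second by the outer annulus $B(0,R_0)\setminus B(0,r_2)$ with $r_2:=(R_0^n-s/\omega_n)^{1/n}$ (well-defined since $s\leq|D^*|$).

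Next I would change variables $r\mapsto R_0(1\pm\epsilon)^{1/n}$ in the two annular integrals, which cleanly normalizes $R_0$ out of the picture. With $\sigma:=s/(\omega_n R_0^n)\in[0,1]$, this collapses the lower bound to
\[
M_2[1_D]-M_2[1_{D^*}]\geq \omega_n R_0^{n+2}\int_0^\sigma\bigl[(1+\epsilon)^{2/n}-(1-\epsilon)^{2/n}\bigr]\,d\epsilon.
\]
Because $|D\Delta D^*|^2=4\sigma^2\omega_n^2 R_0^{2n}$, the target inequality is equivalent to the scalar statement
\[
\phi(\sigma):=\int_0^\sigma\bigl[(1+\epsilon)^{2/n}-(1-\epsilon)^{2/n}\bigr]\,d\epsilon-\frac{2\sigma^2}{n}\geq 0\quad\text{on }[0,1].
\]

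Finally I would verify $\phi\geq0$ by differentiation. One checks $\phi(0)=\phi'(0)=0$ and
\[
\phi''(\sigma)=\frac{2}{n}\bigl[(1+\sigma)^{2/n-1}+(1-\sigma)^{2/n-1}-2\bigr].
\]
For $n\geq3$ the exponent $2/n-1$ is negative, so $t\mapsto t^{2/n-1}$ is convex on $(0,\infty)$ (its second derivative $(2/n-1)(2/n-2)t^{2/n-3}$ is a product of two negatives times a positive), and midpoint convexity at $t=1$ gives $\phi''\geq 0$. For $n=1,2$ the bracket vanishes identically, so $\phi''\equiv 0$ and in fact $\phi\equiv 0$, consistent with the lemma being sharp (equality) for characteristic functions in those dimensions. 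In every case $\phi'\geq\phi'(0)=0$ and hence $\phi\geq\phi(0)=0$, closing the argument. The main subtlety I anticipate is simply matching the constant $1/(2n\omega_n R_0^{n-2})$ exactly — the bathtub reduction itself is routine, but it is the identity $\phi''\equiv 0$ for $n\in\{1,2\}$ that reassures me the constant being proved is precisely the sharp one and not an artifact of a wasteful estimate.
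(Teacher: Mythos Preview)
Your proof is correct and follows essentially the same route as the paper: both reduce via the bathtub principle to the scalar inequality $\int_0^\sigma[(1+\epsilon)^{2/n}-(1-\epsilon)^{2/n}]\,d\epsilon\geq \tfrac{2}{n}\sigma^2$, and both establish it using the convexity of $t\mapsto(1+t)^{2/n-1}$---the paper packages this as Jensen's inequality over the triangular domain $\{|t|\le s\le\sigma\}$, while you package it as $\phi(0)=\phi'(0)=0$, $\phi''\geq 0$. One small caveat on your closing remark: $\phi\equiv 0$ for $n=1,2$ only means the \emph{bathtub lower bound} coincides with the right-hand side of the lemma, not that the lemma itself is an equality for arbitrary $D$ (the bathtub step is still strict unless $D$ is the extremal annular configuration).
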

\begin{proof}
   
    Let $\alpha:=\frac{|D\Delta D^*|}{2|D|} \in [0,1]$, so that $|D\setminus D^*| =|D^*\setminus D|  =\alpha \omega_n R_0^n$. Note that the difference of the second moment can be expressed as 
   \[
      \begin{split}
         M_2[1_D]-M_2[1_{D^*}] 
                                             & =\int_{D\setminus D^*}|x|^2dx-\int_{D^*\setminus D}|x|^2dx.
      \end{split}
   \]
  Since $|x|^2$ is a radially increasing function and $D^* = B(0,R_0)$, the first integral is minimized if $D\setminus D^*$ is an annulus with inner boundary $\partial B(0,R_0)$ and area $\alpha \omega_n R_0^n$. Likewise, the second integral is maximized if $D^*\setminus D$ is an annulus with outer boundary  $\partial B(0,R_0)$ and area $\alpha \omega_n R_0^n$. So 
   \[
      \begin{split}
         M_2[1_D]-M_2[1_{D^*}] & \ge \int_{R_0}^{R_0(1+\alpha)^{\frac{1}{n}}}n\omega_n r^{n+1}dr-\int_{R_0(1-\alpha)^{\frac{1}{n}}}^{R_0}n\omega_n r^{n+1}dr\\
                                             & =\omega_n R_0^{n+2}\int_{0}^{\alpha}((1+s)^{\frac{2}{n}}-(1-s)^{\frac{2}{n}})ds\quad \text{(by substitutions $r=R_0(1\pm s)^{\frac{1}{n}}$)}\\
                                             & =\frac{2\omega_n R_0^{n+2}}{n}\int_{0}^{\alpha}\int_{-s}^{s} (1+t)^{\frac{2}{n}-1}dtds.
      \end{split}
   \]
   Note that for all integers $n\geq 1$, $(1+t)^{\frac{2}{n}-1}$ is convex for $t\geq -1$ (and the integral domain above indeed satisfies $t\geq -1$, since $ \alpha \leq 1$). Applying Jensen's inequality to the double integral on the right hand side and using the fact that $\int_0^\alpha\int_{-s}^s dtds = \alpha^2$, we have 
   \begin{equation}\label{eq1_1}
      \begin{split}
       M_2[1_D]-M_2[1_{D^*}]  & \ge \frac{2\omega_n R_0^{n+2}}{n}\alpha^{4-\frac{4}{n}}\left(\int_{0}^{\alpha}\int_{-s}^{s} (1+t)dtds\right)^{\frac{2}{n}-1}\\
                                            & =\frac{2\omega_n R_0^{n+2}}{n}\alpha^2\\
                                            & =\frac{1}{2n \omega_n R_0^{n-2}}|D\Delta D^*|^2 \quad\text{(using $\alpha=\frac{|D\Delta D^*|}{2\omega_n R_0^n}$)}.
        \end{split}
   \end{equation}   
   \end{proof}

The next lemma deals with the stability of the second moment among all densities in $L_1^+(\mathbb{R}^n)\cap L^\infty(\mathbb{R}^n)$. 
Note that in the $n=1$ case the power of $\|\mu-\mu^*\|_1$ on the right hand side is higher, which is indeed sharp (see Remark~\ref{rmk_1d}). The $n\geq 2$ case has already been covered by a more general result by Lemou \cite[Corollary 1]{Lemou}, which deals with the stability of the $m$-th moment  $\int |x|^m \mu dx$ for $n\geq m$. We give a proof below for the sake of completeness, and also modify the proof for the $n=1$ case.

\begin{lemma}\label{lem_m2}
 Let $\mu \in L_1^+(\mathbb{R}^n)\cap L^\infty(\mathbb{R}^n)$. Assume that $M_2[\mu]<\infty$, but $\mu$ does not need to be compactly supported. Then for $n\geq 2$ we have
  \begin{equation}\label{m2_stab}
  M_2[\mu] - M_2[\mu^*]\geq \big(2n \omega_n^{\frac{2}{n}}\big)^{-1}  \|\mu\|_1^{-1+\frac{2}{n}} \|\mu\|_\infty^{-\frac{2}{n}}\, \|\mu-\mu^*\|_1^2.
  \end{equation}
  For $n=1$, \eqref{m2_stab} holds if $\mu = c1_D$ is a multiple of a characteristic function. For $n=1$ and a general density $\mu \in L_1^+(\mathbb{R})\cap L^\infty(\mathbb{R})$, we have
  \begin{equation}\label{m2_stab_n1}
   M_2[\mu] - M_2[\mu^*]\geq  \frac{1}{16} \|\mu\|_\infty^{-2}\, \|\mu-\mu^*\|_1^3.
  \end{equation}
\end{lemma}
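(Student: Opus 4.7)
The plan is to reduce both inequalities to the characteristic-function case handled by Lemma~\ref{lem1_1} via a layer-cake decomposition. Writing $\mu(x)=\int_0^{\|\mu\|_\infty} 1_{D_t}(x)\,dt$ with $D_t:=\{\mu>t\}$, and using the standard identity $\{\mu^*>t\}=D_t^*$, Fubini yields
\[
M_2[\mu]-M_2[\mu^*]=\int_0^{\|\mu\|_\infty}\bigl(M_2[1_{D_t}]-M_2[1_{D_t^*}]\bigr)\,dt\ \geq\ \frac{1}{2n\omega_n^{2/n}}\int_0^{\|\mu\|_\infty} \frac{|D_t\Delta D_t^*|^2}{|D_t|^{(n-2)/n}}\,dt,
\]
where the last inequality is Lemma~\ref{lem1_1} applied pointwise in $t$ with $R_0(t)=(|D_t|/\omega_n)^{1/n}$. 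The triangle inequality for the same layer-cake representation also gives $\|\mu-\mu^*\|_1\leq\int_0^{\|\mu\|_\infty}|D_t\Delta D_t^*|\,dt$.

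For $n\geq 2$, I would apply Cauchy--Schwarz to this last bound, splitting the integrand into $|D_t\Delta D_t^*|\,|D_t|^{-(n-2)/(2n)}$ times $|D_t|^{(n-2)/(2n)}$, to obtain
\[
\|\mu-\mu^*\|_1^2\leq \Bigl(\int_0^{\|\mu\|_\infty}\frac{|D_t\Delta D_t^*|^2}{|D_t|^{(n-2)/n}}\,dt\Bigr)\Bigl(\int_0^{\|\mu\|_\infty}|D_t|^{(n-2)/n}\,dt\Bigr).
\]
The first factor is controlled by $2n\omega_n^{2/n}(M_2[\mu]-M_2[\mu^*])$. For the second, H\"older's inequality with conjugate exponents $n/(n-2)$ and $n/2$, together with $\int_0^{\|\mu\|_\infty}|D_t|\,dt=\|\mu\|_1$, bounds it above by $\|\mu\|_1^{(n-2)/n}\|\mu\|_\infty^{2/n}$. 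Rearranging yields \eqref{m2_stab}. The case $n=2$ is a degenerate specialization in which the factor $|D_t|^{(n-2)/n}$ collapses to $1$ and only $\int_0^{\|\mu\|_\infty}1\,dt=\|\mu\|_\infty$ is used.

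For $n=1$ with $\mu=c\mathbf{1}_D$, the layer-cake identity collapses since $D_t=D$ for $t<c$ and $\emptyset$ otherwise, reducing the statement to a single application of Lemma~\ref{lem1_1}; translating via $\|\mu\|_1=c|D|$, $\|\mu\|_\infty=c$ recovers \eqref{m2_stab}. The main obstacle is the $n=1$ case for general $\mu$: here $R_0(t)^{n-2}=R_0(t)^{-1}=2/|D_t|$ sits effectively in the numerator, so the Cauchy--Schwarz step above would demand a bound on $\int_0^{\|\mu\|_\infty} 2/|D_t|\,dt$, which can be arbitrarily large when $\mu$ has very small sublevel sets. I would circumvent this by substituting $R_0(t)=|D_t|/2$ directly to get
\[
M_2[\mu]-M_2[\mu^*]\geq \frac{1}{8}\int_0^{\|\mu\|_\infty}|D_t|\,|D_t\Delta D_t^*|^2\,dt,
\]
and then using the dimension-one pointwise bound $|D_t|\geq \tfrac12|D_t\Delta D_t^*|$ to trade the remaining factor of $|D_t|$ for an additional power of $|D_t\Delta D_t^*|$, producing $\ \geq\tfrac{1}{16}\int_0^{\|\mu\|_\infty}|D_t\Delta D_t^*|^3\,dt$. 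A final application of H\"older with exponents $3$ and $3/2$ to $\|\mu-\mu^*\|_1\leq \int_0^{\|\mu\|_\infty}|D_t\Delta D_t^*|\,dt$ gives $\int|D_t\Delta D_t^*|^3dt\geq\|\mu\|_\infty^{-2}\|\mu-\mu^*\|_1^3$, establishing \eqref{m2_stab_n1}. The price paid for circumventing the failed Cauchy--Schwarz is precisely the jump in the exponent on $\|\mu-\mu^*\|_1$ from $2$ to $3$, which the authors' Remark~\ref{rmk_1d} is stated to show is sharp.
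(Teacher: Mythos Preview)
Your proposal is correct and follows essentially the same approach as the paper's own proof: layer-cake decomposition, Lemma~\ref{lem1_1} applied pointwise in the level parameter, Cauchy--Schwarz with the weight $|D_t|^{(n-2)/n}$ (equivalently $R_h^{n-2}$) followed by H\"older for $n\geq 2$, and for $n=1$ the substitution $|D_t|\geq \tfrac12|D_t\Delta D_t^*|$ to obtain a cube and then H\"older with exponents $3$ and $3/2$. The only differences are notational.
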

\begin{proof}
  For $h>0$,  let $D_h:=\{x\in\mathbb{R}^n : \mu(x)>h\}$, thus $D_h^*=B(0,R_h)$ with $R_h := (\omega_n^{-1} |D_h|)^{1/n}\geq 0$. Note that $D_h$ is empty for $h>\|\mu\|_\infty$. Then we have 
   \begin{equation}\label{eq1_2_1}
      \mu(x)=\int_{0}^{\|\mu\|_\infty}1_{D_h}(x)dh,
      \end{equation} 
   and
   \begin{equation}\label{m2_rho_1}
M_2[\mu] = \int_{\mathbb{R}^n} \int_0^{\|\mu\|_\infty} 1_{D_h}(x)  |x|^2 \,dh dx = \int_0^{\|\mu\|_\infty} \int_{D_h}|x|^2  \,dx dh = \int_0^{\|\mu\|_\infty} M_2[1_{D_h}] \,dh.
\end{equation}

  By Lemma \ref{lem1_1}, we estimate the second moment difference as
   \begin{equation}\label{eq1_2_2}
      \begin{split}
         M_2[\mu]-M_2[\mu^*] & =\int_{0}^{\|\mu\|_\infty}(M_2[1_{D_h}]-M_2[1_{D_h^*}])dh
                                              \ge \frac{1}{2n \omega_n }\int_{0}^{\|\mu\|_\infty}\frac{1}{R_h^{n-2}}|D_h\Delta D_h^*|^2dh.
      \end{split}
   \end{equation}
   On the other hand, if $n\geq 2$, by (\ref{eq1_2_1}) and the Cauchy--Schwarz inequality,
   \begin{equation}\label{eq28}
      \begin{split}
         \|\mu-\mu^*\|_{1} 
                                                             & =\int_{\mathbb{R}^n}\left|\int_{0}^{\|\mu\|_\infty}(1_{D_h}(x)-1_{D_h^*}(x))dh \right| dx\\
                                                             & \leq \int_{0}^{\|\mu\|_\infty}|D_h\Delta D_h^*|dh\\
                                                             & \le \bigg(\int_{0}^{\|\mu\|_\infty}\frac{1}{R_h^{n-2}} |D_h\Delta D_h^*|^2dh\bigg)^{\frac{1}{2}} \bigg(\int_{0}^{\|\mu\|_\infty}R_h^{n-2}dh\bigg)^{\frac{1}{2}} . 
      \end{split}
   \end{equation}
   To control the last integral on the right hand side, note that $ \int_{0}^{\|\mu\|_\infty} \omega_n R_h^n dh = \int_{0}^{\|\mu\|_\infty}  |D_h| dh = \|\mu\|_1$, thus H\"older's inequality (and the assumption $n\geq 2$) gives
   \[
   \int_{0}^{\|\mu\|_\infty}R_h^{n-2}dh \leq \bigg(\int_{0}^{\|\mu\|_\infty}R_h^n dh\bigg )^{\frac{n-2}{n}} \|\mu\|_\infty^{\frac{2}{n}} = \omega_n^{-\frac{n-2}{n}}\|\mu\|_1^{\frac{n-2}{n}} \|\mu\|_\infty^{\frac{2}{n}}.
   \]
   Plugging this into \eqref{eq28} and combining it with \eqref{eq1_2_2}, we have the inequality \eqref{m2_stab} for $n\geq 2$.   

It remains to deal with the $n=1$ case. In the special case that $\mu=c 1_D$ is a multiple of a characteristic function, we have $R_h \equiv \|\mu\|_1/(2\|\mu\|_\infty)$ for all $h\in[0,\|\mu\|_\infty)$, and applying it to \eqref{eq28} would still yield \eqref{m2_stab}. For a general density, using that $R_h = \frac{1}{2}|D_h| \geq \frac{1}{4} |D_h\Delta D_h^*|$, \eqref{eq1_2_2} becomes 
\begin{equation}\label{eq210}
M_2[\mu]-M_2[\mu^*] \geq \frac{1}{4} \int_{0}^{\|\mu\|_\infty} R_h |D_h\Delta D_h^*|^2dh \geq \frac{1}{16} \int_{0}^{\|\mu\|_\infty}  |D_h\Delta D_h^*|^3dh,
\end{equation}
thus we can proceed as the first two steps of \eqref{eq28} and use the H\"older equality to obtain
\begin{equation}\label{eq29}
 \|\mu-\mu^*\|_{L^1(\mathbb{R})}\leq  \int_{0}^{\|\mu\|_\infty}|D_h\Delta D_h^*|dh \leq \|\mu\|_{\infty}^{\frac23}\bigg(\int_{0}^{\|\mu\|_\infty}|D_h\Delta D_h^*|^3dh\bigg)^{\frac13},
\end{equation}
and combining it with \eqref{eq210} yields the inequality \eqref{m2_stab_n1} for the $n=1$ case, finishing the proof.
\end{proof}

\begin{remark}\label{rmk_1d}
In the $n=1$ case, for a general density $\mu$, the following example shows that the power 3 on the right hand side of \eqref{m2_stab_n1} is indeed sharp. For $\epsilon\ll 1$, let $\mu = 1_{[-1,1]} + 1_{[0,2\epsilon]}$, so that $\mu^* = 1_{[-1,1]} + 1_{[-\epsilon,\epsilon]}$. One can easily check that $\|\mu\|_1, \|\mu\|_\infty \sim 1$, whereas $M_2[\mu]-M_2[\mu^*] \sim \epsilon^3$ and $\|\mu-\mu^*\|_1 \sim \epsilon$. As a result, one can only expect
\[
M_2[\mu]-M_2[\mu^*] \geq C(\|\mu\|_1, \|\mu\|_\infty) \|\mu-\mu^*\|_1^3,
\]
where the power 3 cannot be lowered.
\end{remark}

Now we are ready to prove Theorem~\ref{theorem1}.

\begin{proof}[\textbf{\textup{Proof of Theorem~\ref{theorem1}}}] Throughout this proof let us fix $R := 20R_*$, and we aim to show that the proof can be reduced to the case where $\rho$ is supported in $B(0,R)$.

Let us define a radially decreasing $\phi \in C(\mathbb{R}^n)$ as
\[\phi(x) := \begin{cases} -|x|^2 &\text{ for } |x|<2R\\
-(2R)^2 &\text{ for } |x|\geq 2R.\end{cases}
\]
We then define 
\[\tilde V := W-\frac{c_{W,R}}{2}\phi,
\] 
where $c_{W,R}$ is given by \eqref{c_general}, and it becomes \eqref{eq:cwkr} for the potentials $W_k$. Note that $\tilde V$ coincides with the function $\tilde W$ given by \eqref{def_tildeW} in $B(0,2R)$, thus the second paragraph of the proof of Lemma~\ref{lem_reduce_m2} yields that $\tilde V$ is radially decreasing in $B(0,2R)$. In fact, $\tilde V$ is radially decreasing in $\mathbb{R}^n$, since $\phi \in C(\mathbb{R}^n)$ and $\phi=\text{const}$ for $|x|\geq 2R$, and $W$ is radially decreasing in $\mathbb{R}^n$. (As a contrast, $\tilde W$ might not be radially decreasing outside $B(0,2R)$.) Decomposing $W$ as $\tilde V +\frac{c_{W,R}}{2}\phi$, we have
\begin{equation}\label{eq_e_eta}
\begin{split}
\mathcal{E}_W[\rho^*] - \mathcal{E}_W[\rho] 
& = (\mathcal{E}_{\tilde V}[\rho^*] - \mathcal{E}_{\tilde V}[\rho] )+ \frac{c_{W,R}}{2} (\mathcal{E}_{\phi}[\rho^*] - \mathcal{E}_{\phi}[\rho] )\\
&\geq \frac{c_{W,R}}{2} (\mathcal{E}_{\phi}[\rho^*] - \mathcal{E}_{\phi}[\rho] ),
\end{split}
\end{equation}
where the inequality follows from the Riesz's rearrangement inequality as well as the fact that $\tilde V$ is radially decreasing in $\mathbb{R}^n$.

Throughout the rest of the proof let \[
m_0 := \sup_{x\in\mathbb{R}^n} \int_{|y|<5R_*} (T_x\rho)(y) dy.
\] Below we discuss the following two cases, and in each case we aim to obtain a lower bound of $\mathcal{E}_{\phi}[\rho^*] - \mathcal{E}_{\phi}[\rho]$.

\noindent\textbf{Case 1.} $m_0 \leq \frac{9}{10}\|\rho\|_1$. In this case any ball with radius $5R_*$ misses at least $\frac{1}{10}$ of the mass of $\rho$. Thus for any $x\in\mathbb{R}^n$ we have
\[
(\rho*\phi)(x) = \int_{\mathbb{R}^n} \rho(x+y)\phi(y)dy \leq -\int_{|y|\geq 5R_*} T_x \rho(y) (5R_*)^2 dy \leq -\frac{5}{2}\|\rho\|_1 R_*^2,
\]
where the first step is due to $\phi(y)=\phi(-y)$; the second step follows from $\phi\leq 0$, and $\phi(y)\leq -(5R_*)^2$ for $|y|\geq 5R_*$; and in the last step we use the assumption $m_0 \leq \frac{9}{10}\|\rho\|_1$.
This directly gives \[
\mathcal{E}_{\phi}[\rho] = \int_{\mathbb{R}^n} \rho(x) (\rho*\phi)(x) dx \leq -\frac{5}{2}\|\rho\|_1^2 R_*^2.\]
As a contrast, since $\supp \rho^* \subset B(0,R_*)$, we have
\[
\mathcal{E}_{\phi}[\rho^*] = -\mathcal{E}_{|x|^2}[\rho^*] = -2\|\rho\|_1 M_2[\rho^*] \geq -2\|\rho\|_1^2 R_*^2,
\]
where we used the definition of $\phi$ in the first equality, and applied \eqref{e_quadratic} to $\rho^*$ to get the second equality.
The above two inequalities immediately yield
\begin{equation}\label{case1}
\mathcal{E}_{\phi}[\rho^*] - \mathcal{E}_{\phi}[\rho] \geq \frac{1}{2}  \|\rho\|_1^2 R_*^2 \geq \frac{1}{2}  \|\rho\|_1^2 R_*^2 \,\delta(\rho)^2,
\end{equation}
where the last inequality follows from $\delta(\rho) \in (0,1)$.

\noindent\textbf{Case 2.} $m_0 > \frac{9}{10}\|\rho\|_1$. In this case there exists $a\in\mathbb{R}^n$, such that $\int_{B(a,5R_*)} \rho\, dy > \frac{9}{10}\|\rho\|_1$. 

Let us construct a new density $\tilde \rho$ supported in $ B(a,20R_*)$ that has the same distribution as $\rho$. Let $T:\supp\rho \to \mathbb{R}^n$ be a measure-preserving map, such that $T = id$ in $\supp\rho \cap B(a,20 R_*)$, and $T(y) \subset B(a,5R_*)$ for $y\in \supp\rho \cap B(a,20 R_*)^c$. (Since $|\supp\rho| = |B(0,R_*)|$, there is enough room in $B(a,5R_*)$ for the map. Note that $T$ does not need to be continuous.) We then define $\tilde \rho(x) := \rho(T^{-1}(x))$. See Figure~\ref{fig_density} for an illustration of the supports of $\rho$ and $\tilde \rho$.

\begin{figure}[h!]
\begin{center}
\includegraphics[scale=0.8]{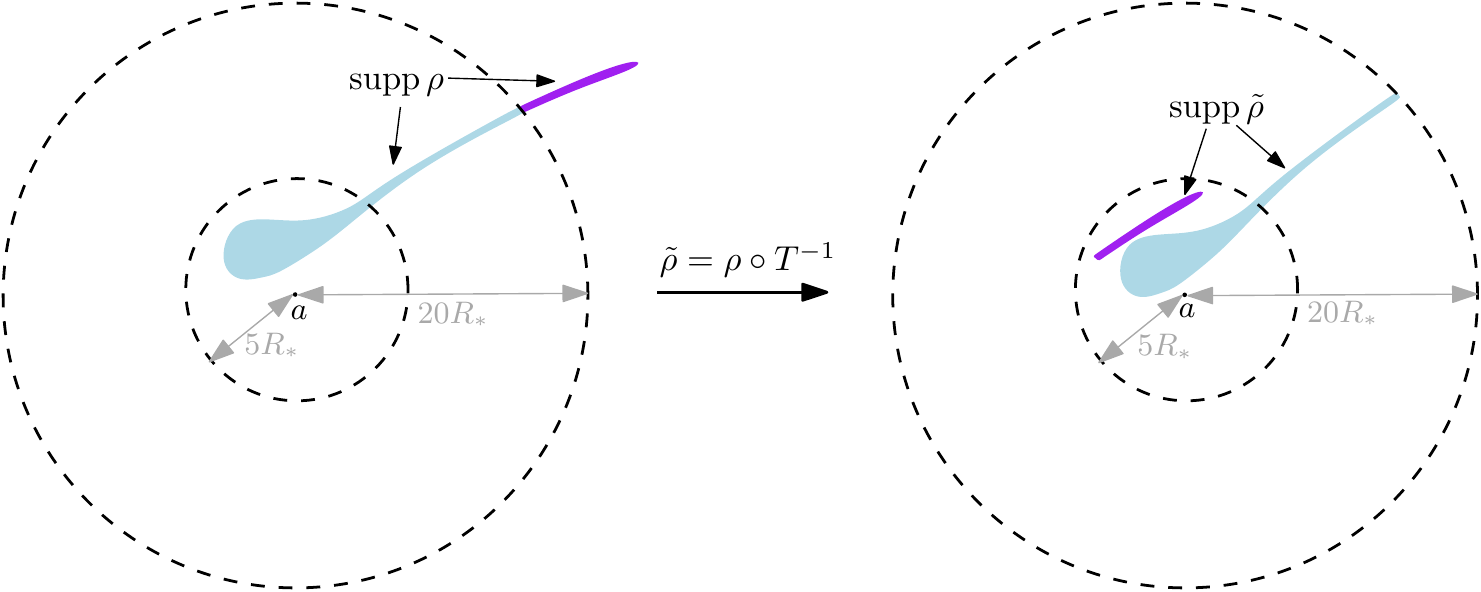}
\caption{The left figure shows $\supp\rho$, colored into blue and purple depending on whether it lies inside $B(a,20R_*)$. The right  figure shows $\supp\tilde\rho$, where the blue set remain unchanged, whereas the purple set is mapped inside $B(a,5R_*)$. 
\label{fig_density}}
\end{center}
\end{figure}

Using $\tilde\rho(x)=\rho(T^{-1}(x))$ and the fact that $T$ is measure preserving, a change of variables gives
\[
\mathcal{E}_{\phi}[\tilde\rho] = \iint_{\mathbb{R}^n \times \mathbb{R}^n} \rho(T^{-1}(x))\rho(T^{-1}(y)) \phi(x-y) dx dy =  \iint_{\mathbb{R}^n \times \mathbb{R}^n} \rho(x)\rho(y) \phi(T(x)-T(y)) dxdy,
\]
leading to
\begin{equation}\label{double_int}
\mathcal{E}_{\phi}[\tilde\rho]  - \mathcal{E}_{\phi}[\rho] =   \int_{\supp\rho}\int_{\supp\rho} \rho(x)\rho(y) \underbrace{\Big(\phi\big(T(x)-T(y)\big) - \phi\big(x-y\big)\Big)}_{=:K(x,y)} dxdy.
\end{equation}

Let us break $\supp\rho$ into the union of the following three sets  $A_{in} := (\supp\rho)\cap B(a,5R_*)$, $A_{mid} := (\supp\rho) \cap B(a,20R_*) \setminus B(a,5R_*)$ and $A_{out} :=  (\supp\rho)\cap B(a,20R_*)^c$. The assumption of Case 2 and our choice of $a$ yield $\int_{A_{in}}\rho dx > \frac{9}{10}\|\rho\|_1$, thus $\int_{A_{mid} \cup A_{out}}\rho dx \leq \frac{1}{10}\|\rho\|_1$.  

Let $m_{out} := \int_{A_{out}}\rho dx \geq 0$.  We claim that 
\begin{equation}\label{claim_tilde_rho}
\mathcal{E}_{\phi}[\tilde\rho]  - \mathcal{E}_{\phi}[\rho] \geq 10\, m_{out} \|\rho\|_1 R_*^2.
\end{equation} To show this, we decompose the integral domain $\supp\rho \times \supp\rho$ in \eqref{double_int}  into the disjoint union of the following sets in (a)--(f):

\noindent(a) If $x, y \in A_{in} \cup A_{mid}$, then $K(x,y)=0$ since $T=id$ in this set. Thus this set gives no contribution to the double integral.

\noindent(b) If $x\in A_{out}$ and $y\in A_{in}$, then $T(x),T(y) \subset B(a,5R_*)$, thus $|T(x)-T(y)| < 10R_*$. Since $|x-y|>15R_*$, it leads to $K(x,y) \geq (-10^2 + 15^2) R_*^2$. Thus 
\[
\iint_{A_{out} \times A_{in}} \rho(x)\rho(y)K(x,y) dxdy \geq \frac{9}{10}(15^2 - 10^2)  m_{out} \|\rho\|_1 R_*^2 > 100 m_{out} \|\rho\|_1 R_*^2 .
\]

\noindent(c) Clearly the same estimate in (b) also holds for $\iint_{A_{in} \times A_{out}} \rho(x)\rho(y) K(x,y) dxdy$.

\noindent(d) If $x\in A_{out}$ and $y\in A_{mid}$,  we have $T(x)\in B(a,5R_*)$ and $T(y)=y \in B(a,20R_*)$, thus $|T(x)-T(y)| \leq 25 R_*$. Using this with $\phi(x-y)\leq 0$ gives $K(x,y) \geq -25^2 R_*^2$, and combining it with $\int_{A_{mid}}\rho dy \leq \frac{1}{10}\|\rho\|_1$ yields
\[
\iint_{A_{out} \times A_{mid}} \rho(x)\rho(y) K(x,y) dxdy \geq -\frac{1}{10}(25)^2 m_{out} \|\rho\|_1 R_*^2 > -70m_{out} \|\rho\|_1 R_*^2 .
\]

\noindent(e) Clearly the same estimate in (d) also holds for $\iint_{A_{mid} \times A_{out}} \rho(x)\rho(y) K(x,y) dxdy$.

\noindent(f) If $x,y \in A_{out}$, we have $T(x),T(y) \in B(a,5 R_*)$, thus $K(x,y) \geq -10^2 R_*^2$. Combining it with $\phi\leq 0$ and $m_{out} = \int_{A_{out}}\rho dy \leq \frac{1}{10}\|\rho\|_1$ yields

\[
\iint_{A_{out} \times A_{out}} \rho(x)\rho(y) K(x,y) dxdy \geq -\frac{1}{10}(10)^2 m_{out} \|\rho\|_1 R_*^2  = -10m_{out} \|\rho\|_1 R_*^2 .
\]

We then obtain the claim \eqref{claim_tilde_rho} by adding the estimates in (a)--(f) together. 

Recall that $\tilde \rho^* = \rho^*$ since $T$ is measure-preserving. This implies $\mathcal{E}_{\phi}[\rho^*] = \mathcal{E}_{\phi}[(\tilde \rho)^*]\geq \mathcal{E}_{\phi}[\tilde\rho]$, thus
 \eqref{claim_tilde_rho} directly gives
\begin{equation}\label{e_phi_tilde}
\mathcal{E}_{\phi}[\rho^*]  - \mathcal{E}_{\phi}[\rho] \geq \mathcal{E}_{\phi}[\tilde\rho]  - \mathcal{E}_{\phi}[\rho] \geq 10 \,m_{out} \|\rho\|_1 R_*^2.
\end{equation}

As a result, Case 2 can be divided into the following two sub-cases:

\noindent \underline{Case 2.1.} $m_{out} > \frac{1}{10} \delta(\rho) \|\rho\|_1$. In this case we directly use \eqref{e_phi_tilde} to conclude that
\begin{equation}\label{case21}
\mathcal{E}_{\phi}[\rho^*]  - \mathcal{E}_{\phi}[\rho] \geq \delta(\rho)  \|\rho\|_1^2 R_*^2 \geq   \|\rho\|_1^2 R_*^2\,\delta(\rho)^2,
\end{equation}
where we used $\delta(\rho)\in[0,1)$ in the last step.

\noindent \underline{Case 2.2.} $m_{out} \leq  \frac{1}{10} \delta(\rho) \|\rho\|_1$. In this case we claim that $\delta(\tilde\rho) \geq \frac{9}{10}\delta(\rho)$. To show this, note that the assumption in Case 2.2 gives $\|\rho-\tilde\rho\|_1 = 2m_{out} \leq \frac{1}{5}\delta(\rho)\|\rho\|_1$. Hence for any $b\in \mathbb{R}^n$, triangle inequality and the fact that $(\tilde\rho)^* = \rho^*$ yield
\[
\|T_b \tilde\rho - (\tilde \rho)^*\|_1 \geq \|T_b \rho - \rho^*\|_1 - \|T_b (\tilde \rho - \rho)\|_1 \geq 2\delta(\rho)\|\rho\|_1-  \frac{1}{5} \delta(\rho) \|\rho\|_1 = \frac{9}{5}\delta(\rho)\|\rho\|_1.
\]
Taking the infimum in $b$ in the above inequality and dividing by $2\|\tilde\rho\|_1$ (and note that $\|\rho\|_1 = \|\tilde\rho\|_1$) yields the claim.

Using \eqref{claim_tilde_rho} and the fact that $\rho^* = (\tilde\rho)^*$, we have
\begin{equation}\label{diff_phi}
\begin{split}
\mathcal{E}_\phi[\rho^*] - \mathcal{E}_\phi[ \rho] &\geq 
\mathcal{E}_\phi[(\tilde\rho)^*] - \mathcal{E}_\phi[\tilde \rho]\\
& = \mathcal{E}_{|x|^2}[\tilde \rho] - \mathcal{E}_{|x|^2}[(\tilde\rho)^*] \\&= 2\|\rho\|_1 ( M_2[T_{x_0}\tilde \rho] - M_2[(\tilde\rho)^*]),
\end{split}
\end{equation}
where $x_0$ is the center of mass of $\tilde\rho$.  
Here the second step follows from the facts that $\supp \tilde\rho \subset B(0,R)$ (recall that $R = 20R_*$) and $\phi = -|x|^2$ in $B(0,2R)$, and the last step follows from the identity \eqref{e_quadratic} applied to $\tilde\rho$ and $(\tilde\rho)^*$.

Next we will apply Lemma~\ref{lem_m2} to estimate the second moment difference on the right hand side of \eqref{diff_phi}.  If $n\geq 2$, we apply \eqref{m2_stab}  to $\mu = T_{x_0}\tilde \rho$ to obtain 
 \begin{equation}\label{m2_temp1}
 M_2[T_{x_0}\tilde \rho] - M_2[(\tilde\rho)^*] \geq c(n)   \|\rho\|_1^{1+\frac{2}{n}} \|\rho\|_\infty^{-\frac{2}{n}} \,\delta(\tilde\rho)^2,
 \end{equation}
 where we used $\|\mu-\mu^*\|_1 \geq 2\|\mu\|_1 \delta(\mu) = 2\|\rho\|_1\delta(\mu)$ and $\delta(\mu) = \delta(\tilde\rho)$.
 Plugging \eqref{m2_temp1} into \eqref{diff_phi}, and using $\delta(\tilde\rho) \geq \frac{9}{10}\delta(\rho)$, we have 
 \begin{equation}\label{case22}
 \mathcal{E}_\phi[\rho^*] - \mathcal{E}_\phi[ \rho] \geq c(n) \|\rho\|_1^{2+\frac{2}{n}} \|\rho\|_\infty^{-\frac{2}{n}} \,\delta(\rho)^2.
 \end{equation}
 
 Note that the two inequalities \eqref{case1} and \eqref{case21} from Case 1 and Case 2.1 are both stronger than \eqref{case22}, due to the relation $\omega_n \|\rho\|_\infty R_*^n \geq \|\rho\|_1$. As a result, in all the three cases we have \eqref{case22} for $n\geq 2$. Plugging it into \eqref{eq_e_eta} finally gives
 \begin{equation}\label{temp_n2}
 \mathcal{E}_W[\rho^*] - \mathcal{E}_W[\rho] \geq c(n) c_{W,R} \|\rho\|_1^{2+\frac{2}{n}} \|\rho\|_\infty^{-\frac{2}{n}} \,\delta(\rho)^2,
 \end{equation}
 finishing the proof for $n\geq 2$. 
 
 If $n=1$ and $\rho$ is a multiple of characteristic function (note that $\tilde \rho$ is also a multiple of characteristic function due to our construction), we can still apply \eqref{m2_stab} in Lemma~\ref{lem_m2} to obtain \eqref{m2_temp1}, therefore \eqref{temp_n2} still holds in this case.
 
If $n=1$ and $\rho$ is a general density, we have to apply \eqref{m2_stab_n1} in Lemma~\ref{lem_m2}, thus instead of \eqref{m2_temp1} we now have
\[
 M_2[T_{x_0}\tilde \rho] - M_2[(\tilde\rho)^*] \geq c   \|\rho\|_1^3\, \|\rho\|_\infty^{-2} \,\delta(\tilde\rho)^3.
 \]
This is the same as \eqref{m2_temp1} except with $\delta(\tilde\rho)^2$ replaced by $\delta(\tilde\rho)^3$, thus 
 an identical argument as \eqref{case22}--\eqref{temp_n2} now gives \eqref{temp_n2} with $\delta(\rho)^2$ replaced by $\delta(\rho)^3$, finishing the proof for $n=1$.
  \end{proof}

\begin{remark}\label{rmk_1d_2}
If $n=1$, for a general density $\rho$, the following example shows that the power of $\delta(\rho)$ on the right hand side of \eqref{thm1_eq1} cannot be lower than 3, thus the power 3 in Theorem~\ref{theorem1} is indeed sharp. For $\epsilon\ll 1$, let $\rho := 1_{[-1,1]} + 1_{[0,2\epsilon]}$ be the same as in Remark~\ref{rmk_1d}, so $\rho^* = 1_{[-1,1]} + 1_{[-\epsilon,\epsilon]}$. We can easily check that $\|\rho\|_1, \|\rho\|_\infty \sim 1$, and $\delta(\rho)\sim\epsilon$. In addition, we claim that
\[
\mathcal{E}_W[\rho^*]- \mathcal{E}_W[\rho] \leq C(W) \epsilon^3,
\]
which would imply the sharpness of $\delta(\rho)^3$ in \eqref{thm1_eq1} for $n=1$.

To prove the claim, let $f_1 := 1_{[-1,1]} $ and $f_2 := 1_{[-\epsilon,\epsilon]}$, so $\rho^* = f_1+f_2$, and $\rho = f_1 + T_{-\epsilon} f_2$. Thus
\begin{equation}\label{temp_dim1}
\begin{split}
\mathcal{E}_W[\rho^*]- \mathcal{E}_W[\rho] &= \int_{\mathbb{R}} (f_1+f_2) ((f_1+f_2)*W) dx - \int_{\mathbb{R}} (f_1+T_{-\epsilon} f_2) ((f_1+T_{-\epsilon} f_2)*W) dx\\
&= 2 \int_{\mathbb{R}} (f_2 - T_{-\epsilon} f_2) (f_1*W) dx\\
& = 2\left( \int_{-\epsilon}^0 (f_1*W)(x) dx -  \int_{\epsilon}^{2\epsilon} (f_1*W)(x) dx\right),
\end{split}
\end{equation}
where the second equality follows from the identities $\int f_1(f_2*W) dx = \int f_2(f_1*W) dx$ and $\int (T_{-\epsilon}  f_2) (T_{-\epsilon} f_2)*W dx= \int   f_2 ( f_2*W ) dx$. Note that $(f_1*W)(x) = \int_{-1+x}^{1+x} W(y)dy$ is radially decreasing, and $f_1*W \in C^\infty((-1,1))$ since $W \in C^\infty(\mathbb{R}\setminus \{0\})$. Thus $\sup_{[-\frac{1}{2},\frac{1}{2}]} |(f_1*W)''|<C(W)$. Combining it with the symmetry of $f_1*W$ gives 
\[
(f_1*W)(0)-(f_1*W)(x) \leq C(W) x^2\quad\text{ for all }|x|<\frac{1}{2}.
\] Applying this to the right hand side of \eqref{temp_dim1} yields
\[
\mathcal{E}_W[\rho^*]- \mathcal{E}_W[\rho] \leq 2\epsilon((f_1*W)(0)-(f_1*W)(2\epsilon)) \leq C(W)\epsilon^3,
\]
finishing the proof of the claim.
\end{remark}

\section{Stability with respect to 2-Wasserstein distance}\label{sec4}

In this section we aim to prove Theorem~\ref{theorem2}, which is a stability estimate of the interaction energy with respect to the 2-Wasserstein distance.
Since $\supp\rho\subset B(0,R)$ by assumption, using Lemma~\ref{lem_reduce_m2}, it suffices to prove the stability of the second moment with respect to the 2-Wasserstein distance. To our best knowledge, we are unaware of such result in the literature. Below we state and prove such an estimate, which might be of independent interest. 

\begin{proposition}\label{prop_m2}
For any $\rho \in \mathcal{P}_2(\mathbb{R}^n)$, the following inequality holds:
\begin{equation}\label{m2_goal}
M_2[\rho] - M_2[\rho^*] \geq W_2^2(\rho, \rho^*).
\end{equation}
\end{proposition}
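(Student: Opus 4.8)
The plan is to exhibit a single transport plan from $\rho^*$ to $\rho$ whose cost is controlled by the second-moment difference, and for this the natural candidate is the ``layer-cake transport'' built from the super-level sets. Write $\rho = \int_0^{\|\rho\|_\infty} 1_{D_h}\,dh$ and $\rho^* = \int_0^{\|\rho\|_\infty} 1_{D_h^*}\,dh$ as in the proof of Lemma~\ref{lem_m2}, where $D_h = \{\rho > h\}$ and $D_h^* = B(0,R_h)$ with $\omega_n R_h^n = |D_h|$. For each fixed $h$, the key geometric fact is that among all couplings of the uniform measure on $D_h^*$ with the uniform measure on $D_h$, transporting radially --- mass at radius $r$ in the ball $D_h^*$ to a point of $D_h$ at the same radius whenever possible --- is nearly optimal, and its squared cost is bounded by $|D_h| \cdot C(n) R_h^{-n}\, (M_2[1_{D_h}] - M_2[1_{D_h^*}])$-type quantities. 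Actually the cleaner route: for the single set $D_h$, build a measure-preserving map $S_h : D_h^* \to \mathbb{R}^n$ which is the identity on $D_h^* \cap D_h$ and which sends $D_h^* \setminus D_h$ onto $D_h \setminus D_h^*$; since every point of $D_h^* \setminus D_h$ has modulus $\le R_h$ and every point of $D_h \setminus D_h^*$ has modulus $\ge R_h$, one gets the pointwise bound $|x|^2 \le |S_h(x)|^2$, hence $\int_{D_h^*} |S_h(x) - x|^2 \, dx \le \int_{D_h^*}(|S_h(x)|^2 - |x|^2)\,dx = M_2[1_{D_h}] - M_2[1_{D_h^*}]$ once one also checks the cross term $\int x \cdot S_h(x)\,dx \ge \int |x|^2 dx$ (which again follows from radial monotonicity of $S_h$). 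Wait — that cross term is the subtle point and is where I expect the main difficulty; see below.

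Next I would assemble these fiberwise maps into a global coupling. Concretely, define the plan $\pi$ on $\mathbb{R}^n \times \mathbb{R}^n$ by $\pi := \frac{1}{\|\rho\|_\infty}\int_0^{\|\rho\|_\infty} (\mathrm{id}\times S_h)_\# \big(\mathcal{L}^n|_{D_h^*}\big)\, dh$, suitably normalized, and verify that its first marginal is $\rho^*$ and its second marginal is $\rho$ (this uses the layer-cake identities and that $S_h$ is measure-preserving from $D_h^*$). Then
\[
W_2^2(\rho,\rho^*) \le \int |x-y|^2 \, d\pi(x,y) = \int_0^{\|\rho\|_\infty} \int_{D_h^*} |S_h(x) - x|^2 \, dx\, dh \le \int_0^{\|\rho\|_\infty} \big(M_2[1_{D_h}] - M_2[1_{D_h^*}]\big)\, dh,
\]
and the right-hand side equals $M_2[\rho] - M_2[\rho^*]$ by \eqref{m2_rho_1}. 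That would give \eqref{m2_goal} with the sharp constant $1$.

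The step I expect to be the genuine obstacle is producing, for each $h$, a measure-preserving $S_h : D_h^* \to \mathbb{R}^n$ supported-correctly \emph{and} satisfying $\int_{D_h^*} x\cdot S_h(x)\,dx \ge \int_{D_h^*}|x|^2\,dx$ (equivalently $\int |S_h(x)-x|^2 dx \le \int(|S_h(x)|^2 - |x|^2)dx$). Pointwise $|x| \le |S_h(x)|$ alone does not control the inner product. The fix is to choose $S_h$ \emph{radially monotone}: pick it so that on $D_h^* \setminus D_h$ it is ``radial'', i.e. $S_h(x) = \lambda(|x|)\, x/|x|$ with $\lambda$ nondecreasing and $\lambda(r) \ge r$, mapping the level set $\{|x| = r\}$-portion inside $D_h^*\setminus D_h$ onto a shell $\{|y| = \lambda(r)\}$-portion inside $D_h \setminus D_h^*$ of equal $(n-1)$-measure. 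With $S_h$ radial in this sense, $x \cdot S_h(x) = |x|\,\lambda(|x|) \ge |x|^2$ pointwise, and the desired inequality follows immediately. One must check such a $\lambda$ and such a measure-preserving arrangement exist: this is a one-dimensional rearrangement statement about the radial distribution functions of $D_h\setminus D_h^*$ and $D_h^*\setminus D_h$ (both have the same total mass), and it should follow by a standard monotone-rearrangement/Hardy--Littlewood argument on the half-line, exactly parallel to how Lemma~\ref{lem1_1} reduced everything to annuli. A secondary technical point is the measurability of $h \mapsto S_h$ needed to make $\pi$ a genuine measure; this can be arranged by choosing $S_h$ canonically (the monotone radial rearrangement is essentially unique up to measure zero), or circumvented by a density/approximation argument reducing to finitely many levels. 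An alternative to the whole construction, which avoids cross-term bookkeeping, is to invoke that $W_2^2$ is itself given by an $L^2$ layer-cake-type formula or to use the contraction of $W_2$ under the map $\rho \mapsto \rho^*$ type results — but the direct transport plan above is the most self-contained.
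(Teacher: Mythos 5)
Your overall architecture --- layer-cake decomposition, a per-layer transport from $1_{D_h^*}$ to $1_{D_h}$ with cost at most $M_2[1_{D_h}]-M_2[1_{D_h^*}]$, and superposition of the layerwise plans into a single plan between $\rho^*$ and $\rho$ --- is exactly the paper's, and you have correctly isolated the crux: the cross term $\int_{D_h^*} x\cdot(S_h(x)-x)\,dx\geq 0$. But your proposed resolution of that crux fails. A direction-preserving radial map $S_h(x)=\lambda(|x|)\,x/|x|$ sending $D_h^*\setminus D_h$ onto $D_h\setminus D_h^*$ simply need not exist: the angular supports of these two sets can be disjoint (e.g.\ $D_h$ equal to the ball with a small cap removed near the north pole and an equal-volume blob added outside the ball near the south pole), so the radial projection of $\{|x|=r\}\cap(D_h^*\setminus D_h)$ onto the sphere of radius $\lambda(r)$ lands nowhere near $D_h\setminus D_h^*$. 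Once you allow angular rearrangement, $x\cdot S_h(x)=|x|\,\lambda(|x|)\cos\theta$ can be negative and the cross term is lost. Worse, the defect is not merely in the choice of angular matching: \emph{any} measure-preserving map that is the identity on $D_h^*\cap D_h$ is doomed in the cap example above, since its cost is at least (volume of the cap)$\times$(distance between the poles)$^2\sim 4\epsilon$, while $M_2[1_{D_h}]-M_2[1_{D_h^*}]\sim \epsilon\delta$ can be made arbitrarily smaller. So the per-layer inequality you need is false for the class of maps you construct.

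The paper's fix is to drop the ``identity on the intersection'' ansatz entirely and take $T_h$ to be the \emph{optimal} transport map from $1_{D_h^*}$ to $1_{D_h}$ (which in general moves points of $D_h^*\cap D_h$ as well, spreading the displacement over the whole ball). The cross-term inequality $\int_{D_h^*} x\cdot(T_h(x)-x)\,dx\geq 0$ is then proved not pointwise but variationally: along the displacement interpolation $\mu_t=((1-t)\,id+tT_h)\#1_{D_h^*}$, displacement convexity of $\mu\mapsto\int\mu^p$ gives $\|\mu_t\|_\infty\leq 1$, and since $1_{D_h^*}$ minimizes $M_2$ among densities with $\|g\|_\infty\leq 1$ and mass $|D_h|$, the function $t\mapsto M_2[\mu_t]$ has a minimum at $t=0$; its right derivative there is exactly twice the cross term (see Lemma~\ref{lemma_char}). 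If you want to complete your proof, replace your explicit $S_h$ by this optimal $T_h$ and supply the first-variation argument; the superposition step and the final identification with $M_2[\rho]-M_2[\rho^*]$ via \eqref{m2_rho_1} then go through as you wrote them.
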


\begin{remark}\label{rmk42}
One can easily check that the power 2 and the constant 1 on the right hand side of \eqref{m2_goal} are both sharp: let $B\subset\mathbb{R}^n$ be a ball centered at the origin with $|B|=1$, and let $D_\epsilon$ be any subset of $B$ with volume $\epsilon$. For $R>3$, let $x_R = (R,0,\dots, 0)$, and finally let $\rho = 1_{B \setminus D_\epsilon} + T_{x_R} 1_{D_\epsilon}$. (Note that $\rho^* = 1_B$.) Then we have $M_2[\rho] - M_2[\rho^*] \leq \epsilon (R+2)^2$ and $W_2^2(\rho, \rho^*)\geq \epsilon (R-2)^2$. By fixing an $R>3$ and sending $\epsilon\to 0^+$, we know that the power 2 in \eqref{m2_goal} is sharp. Since $R$ can be chosen as arbitrarily large,  the constant 1 in \eqref{m2_goal} is also sharp.
\end{remark}

Before proving Proposition~\ref{prop_m2}, let us first introduce some preliminary results on optimal transport, which will be used in the proof.  For any two density functions $\rho_1, \rho_2 \in L_+^1(\mathbb{R}^n)$ with the \emph{same} integral $\int_{\mathbb{R}^n} \rho_1 dx = \int_{\mathbb{R}^n} \rho_2 dx = M$, 
if $M_2[\rho_i]<\infty$ for $i=1,2$,  optimal transport theory (see \cite[Section 2]{Villani} for example) shows that the infimum
\[
\inf\Big\{\int_{\mathbb{R}^n} \rho_1(x) |T(x)-x|^2 dx: T \# \rho_1 = \rho_2 \Big\}
\]
can be achieved by some map $T:\mathbb{R}^n\to \mathbb{R}^n$. (The proof is done for probability densities with $M=1$, but the same proof indeed works for all $M>0$.) For any $M>0$, with a slight abuse of notation, we will call such $T$ the \emph{optimal transport map} between $\rho_1$ and $\rho_2$.

The following lemma shows that Proposition~\ref{prop_m2} is true among characteristic functions.

\begin{lemma}\label{lemma_char}
Assume $D \subset \mathbb{R}^n$ satisfies that $M_2[1_D]<\infty$.  Then we have 
\begin{equation}\label{lemma_goal}
M_2[1_D] - M_2[1_D^*] \geq \inf\Big\{\int_{D^*} |T(x)-x|^2 dx: T \# 1_D^* = 1_D \Big\}.
\end{equation}
\end{lemma}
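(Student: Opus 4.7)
The plan is to take $T$ to be the Brenier optimal transport map from $1_{D^*}$ to $1_D$ and show that this specific competitor attains a cost bounded by $M_2[1_D]-M_2[1_{D^*}]$. By Brenier's theorem there exists a convex $\phi:\mathbb{R}^n\to\mathbb{R}$ such that $T:=\nabla\phi$ is defined a.e.\ on $D^*$ and satisfies $T\#1_{D^*}=1_D$. Using $\int_{D^*}|T(x)|^2\,dx=M_2[1_D]$ together with the expansion $|T(x)-x|^2=|T(x)|^2+|x|^2-2x\cdot T(x)$, the desired inequality $\int_{D^*}|T-x|^2\,dx\leq M_2[1_D]-M_2[1_{D^*}]$ is equivalent to
\[
\int_{D^*} x\cdot\bigl(T(x)-x\bigr)\,dx\;\geq\;0,
\]
which is what I will actually prove.

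To prove this, set $\psi(x):=\phi(x)-\tfrac{1}{2}|x|^2$, so that $T-x=\nabla\psi$. The central observation is that $\psi$ is \emph{subharmonic} on $D^*$. Indeed, since both source and target densities equal $1$ on their respective supports, the Monge--Amp\`ere equation gives $\det(D^2\phi)=1$ a.e.\ on $D^*$; combined with the PSD property of $D^2\phi$ inherited from the convexity of $\phi$, the arithmetic-geometric mean inequality applied to the eigenvalues of $D^2\phi$ yields
\[
\tfrac{1}{n}\Delta\phi\;=\;\tfrac{1}{n}\,\mathrm{tr}(D^2\phi)\;\geq\;(\det D^2\phi)^{1/n}\;=\;1,
\]
so $\Delta\psi=\Delta\phi-n\geq 0$ on $D^*$.

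The conclusion follows from integration by parts on the ball $D^*=B(0,R_0)$. Writing $\nabla\cdot(\psi x)=n\psi+x\cdot\nabla\psi$ and applying the divergence theorem with outward unit normal $\nu(x)=x/|x|$ (so that $x\cdot\nu=R_0$ on $\partial D^*$),
\[
\int_{D^*}x\cdot\nabla\psi\,dx\;=\;R_0\int_{\partial D^*}\psi\,dS-n\int_{D^*}\psi\,dx\;=\;n|D^*|\bigl(\overline{\psi}_{\partial D^*}-\overline{\psi}_{D^*}\bigr),
\]
where the bars denote surface and volume averages. Because $\psi$ is subharmonic, its spherical means $r\mapsto\overline{\psi}_{\partial B(0,r)}$ are nondecreasing in $r$, and averaging against the weight $r^{n-1}$ on $(0,R_0)$ gives $\overline{\psi}_{D^*}\leq\overline{\psi}_{\partial D^*}$; this yields the desired non-negativity.

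The main technical subtlety is regularity. Since $D$ is not assumed convex, Caffarelli's regularity theory does not guarantee classical smoothness of $\phi$, so the Monge--Amp\`ere equation and the inequality $\Delta\phi\geq n$ must be interpreted in the Alexandrov/distributional sense. For convex $\phi$ this causes no real obstruction: the distributional Laplacian of $\phi$ dominates its absolutely continuous part, so $\psi$ is a bona fide distributional subharmonic function, and both the spherical mean monotonicity and the integration by parts remain valid ($\phi$ is in particular Lipschitz on $\overline{D^*}$ once $D$ is bounded, which is the relevant case once $M_2[1_D]<\infty$). Alternatively, one can approximate $1_{D^*}$ and $1_D$ by smooth strictly positive densities on smoothed domains, carry out the clean computation in the regular setting, and pass to the limit using the stability of $W_2^2$ and the continuity of $M_2$ under weak convergence with uniformly bounded second moments.
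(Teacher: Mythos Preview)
Your argument is correct and reaches the same intermediate target as the paper --- the inequality $\int_{D^*} x\cdot(T(x)-x)\,dx\geq 0$ for the Brenier map $T$ --- but establishes it by a genuinely different route.

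The paper argues via displacement interpolation: setting $\mu_t:=((1-t)\,id+tT)\#1_{D^*}$, it notes that $t\mapsto M_2[\mu_t]$ is quadratic in $t$ with second-order coefficient equal to the transport cost. McCann's displacement convexity forces $\|\mu_t\|_\infty\leq 1$, and since $1_{D^*}$ minimizes $M_2$ among all nonnegative densities with mass $|D|$ and $L^\infty$-bound $1$, one obtains $M_2[\mu_t]\geq M_2[\mu_0]$ for all $t$, hence the linear coefficient $2\int_{D^*}x\cdot(T(x)-x)\,dx$ is nonnegative. This sidesteps the Monge--Amp\`ere equation and requires no regularity of $T$ beyond its existence.

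Your route instead exploits the Jacobian equation $\det D^2\phi=1$ together with AM--GM on the eigenvalues to deduce subharmonicity of $\psi=\phi-\tfrac12|x|^2$, and then reads off the sign via the mean-value inequality on the ball $D^*$. The identity $\int_{D^*}x\cdot\nabla\psi\,dx=n|D^*|\bigl(\overline\psi_{\partial D^*}-\overline\psi_{D^*}\bigr)$ is a pleasant way to package the conclusion. The price is exactly the regularity discussion you flag: one must interpret $\Delta\phi\geq n$ via Alexandrov's theorem and the fact that the distributional Hessian of a convex function dominates its absolutely continuous part.

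One small inaccuracy: $M_2[1_D]<\infty$ does \emph{not} force $D$ to be bounded, so $\phi$ need not be Lipschitz on $\overline{D^*}$ and the boundary integral $\int_{\partial D^*}\psi\,dS$ may be delicate. The clean fix is to run your integration-by-parts and mean-value argument on $B(0,r)$ for each $r<R_0$ (where $\phi$, being convex, is Lipschitz on the compact set $\overline{B(0,r)}\subset\mathrm{int}\,D^*$), conclude $\int_{B(0,r)}x\cdot\nabla\psi\,dx\geq 0$, and let $r\uparrow R_0$ by dominated convergence using $x\cdot\nabla\psi\in L^1(D^*)$. This is lighter than the smooth-density approximation you propose.
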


\begin{proof}  By the discussion before this lemma, we know there exists some optimal transport map $T:\mathbb{R}^n\to \mathbb{R}^n$ with $T\# 1_{D^*} = 1_D$, such that the infimum on the right hand side \eqref{lemma_goal} is achieved by $T$.  For $t\in[0,1]$, let  $\mu_t$ be given by
 \[
\mu_t := ((1-t) id + tT)\# 1_D^*.\] Note that $\mu_0=1_D^*$ and $\mu_1 = 1_D$. (If $|D|=1$, $\{\mu_t\}_{t\in[0,1]}$ is the geodesics connecting $1_D^*$ and $1_D$ in 2-Wasserstein metric.) By definition of $\mu_t$ and using the property of the push-forward map (see \cite[Eq (5.2.2)]{AGS}), we have
\begin{equation}\label{eq:2nd_der}
\begin{split}
M_2[\mu_t] &= \int_{\mathbb{R}^n} \mu_t(x) |x|^2  dx= \int_{\mathbb{R}^n} 1_D^*(x) \big|(1-t)x + tT(x)\big|^2  dx\\
&= M_2[1_D^*] + 2t \int_{D^*} x\cdot(T(x)-x) dx + t^2 \int_{D^*}  |T(x)-x|^2  dx
\end{split}
\end{equation}
for all $t\in[0,1]$, thus the function $t\mapsto M_2[\mu_t]$ is in $C^2([0,1])$. 

Next we claim that $M_2[\mu_t] \geq M_2[\mu_0]$ for all $t\in[0,1]$.  To see this, note that for any $t\in [0,1]$, $\mu_t$ is a nonnegative density with integral $|D|$. Using that $T$ is the optimal map such that $T\#\mu_0=\mu_1$, the function $t\mapsto \int_{\mathbb{R}^n} \mu_t^p dx$ is convex for $t\in[0,1]$ for any $1<p<\infty$ (see \cite[Proposition 9.3.9]{AGS} or \cite[Theorem 2.2]{McCann}), which leads to
\[
\|\mu_t\|_p \leq \max\{\|\mu_0\|_p, \|\mu_1\|_p\} = \|1_D\|_p \quad\text{ for all }1<p<\infty.
\] Sending $p\to\infty$ in the above expression gives $\|\mu_t\|_\infty \leq 1$ for all $t\in[0,1]$. It is easy to see that among all functions $g \in L^1_+(\mathbb{R}^n) \cap L^\infty(\mathbb{R}^n)$ satisfying $\|g\|_1 = |D|$ and $\|g\|_\infty\leq 1$ ($g$ does not need to be a characteristic function), $\mu_0 = 1_D^*$ is the one that minimizes the second moment, due to the fact that $|x|^2$ is a radially increasing function. This finishes the proof of the claim.

As a result of the claim, we have $\frac{d^+}{dt} M_2[\mu_t]\big|_{t=0}\geq 0$ (where $\frac{d^+}{dt}$ denotes the right derivative). Combining it with \eqref{eq:2nd_der} gives that $\int_{D^*}  x\cdot(T(x)-x) dx\geq 0$, thus \eqref{eq:2nd_der}  becomes
\[
M_2[\mu_t] \geq M_2[1_D^*] + t^2  \int_{D^*}  |T(x)-x|^2  dx,
\]
and plugging in $t=1$ gives the desired result.
\end{proof}

Now we are ready to prove Proposition~\ref{prop_m2} for a general probability density.\footnote{One might be tempted to use the same idea as in Lemma~\ref{lemma_char} and let $\mu_t$ be the geodesics  connecting $\rho^*$ and $\rho$ in $W_2$ metric. Although \eqref{eq:2nd_der} still holds with $1_{D^*}$ replaced by $\rho^*$, it is unclear to us whether $M_2[\mu_t]\geq M_2[\mu_0]$ for $t\in[0,1]$ is true for a general density. (Even though $\|\mu_t\|_p \leq \|\rho^*\|_p$ for $1<p\leq\infty$ is still true, this does not imply $M_2[\mu_t] \geq M_2[\rho^*]$ since $\rho^*$ is no longer a characteristic function.) To circumvent this difficulty, the proof of Proposition~\ref{prop_m2} does not use the optimal  transport map between $\rho^*$ and $\rho$. Instead, we will decompose $\rho, \rho^*$ using the layer-cake formula, and build a (non-optimal) transport plan by integrating the optimal map for each layer.}

\begin{proof}[\textbf{\textup{Proof of Proposition~\ref{prop_m2}}}]
For any $h>0$, let $D_h := \{x\in\mathbb{R}^n: \rho(x)>h\}$. Then we have that $\rho(x) = \int_0^\infty 1_{D_h}(x) dh$ for every $x$, thus the same computation as \eqref{m2_rho_1} gives
\begin{equation}\label{m2_rho}
M_2[\rho] 
 = \int_0^\infty M_2[1_{D_h}] \,dh.
\end{equation}
Note that $M_2[1_{D_h}]$ is decreasing in $h$, since the set $D_h$ is decreasing in $h$ (in the sense that $D_{h_1} \subset D_{h_2}$ for $h_1>h_2>0$). 
By assumption we have $M_2[\rho]<\infty$, and by \eqref{m2_rho} this leads to $M_2[1_{D_h}] < \infty$ for all $h>0$. 

For any $h>0$, let $T_h: \mathbb{R}^n \to \mathbb{R}^n$ be the optimal transport map such that $T_h \# 1_{D_h^*} = 1_{D_h}$,  and let $\gamma_h := (id \times T_h) \# 1_{D_h^*}$ be the transport plan corresponding to $T_h$. Note that $\gamma_h$ is a measure on $\mathbb{R}^n\times \mathbb{R}^n$ with first marginal $1_{D_h^*}$ and second marginal $1_{D_h}$.

By Lemma~\ref{lemma_char} and the optimality of $T_h$, we have
\begin{equation}\label{ineq_layer}
M_2[1_{D_h}] - M_2[1_{D_h^*}] \geq \int_{D_h^*} |T_h(x)-x|^2 dx = \iint_{\mathbb{R}^n\times \mathbb{R}^n} |x-y|^2 d\gamma_h(x,y).
\end{equation}
Finally, let
\[
\gamma(x,y) := \int_0^\infty \gamma_h(x,y) dh,
\]
which is a measure on $\mathbb{R}^n\times \mathbb{R}^n$ with first marginal $\rho^*$ and second marginal $\rho$. Therefore $\gamma$ is a transport plan between $\rho^*$ and $\rho$, but not necessarily an optimal one.

Integrating \eqref{ineq_layer} in $h$ and applying \eqref{m2_rho} gives
\[
M_2[\rho] - M_2[\rho^*] = \int_0^\infty \big(M_2[1_{D_h}] - M_2[1_{D_h^*}] \big) dh\geq   \iint_{\mathbb{R}^n\times \mathbb{R}^n} |x-y|^2 d\gamma(x,y).
\]
Using that $\gamma$ is a transport plan between $\rho^*$ and $\rho$ (which is not necessarily optimal), it gives $ \iint_{\mathbb{R}^n\times \mathbb{R}^n} |x-y|^2 d\gamma(x,y) \geq W_2^2(\rho,\rho^*)$, and plugging it into the above inequality yields
\[
M_2[\rho] - M_2[\rho^*] \geq W_2^2(\rho, \rho^*),
\]
thus we can conclude.
\end{proof}

Once we obtain Proposition~\ref{prop_m2}, Theorem~\ref{theorem2} follows as a direct consequence. 
\begin{proof}[\textbf{\textup{Proof of Theorem~\ref{theorem2}}}]
Using Lemma~\ref{lem_reduce_m2} and the fact that $\|\rho\|_1=1$, we have
\[
\mathcal{E}_W[\rho^*] - \mathcal{E}_W[\rho] \geq c_{W,R} (M_2[T_{x_0}\rho] - M_2[\rho^*]).
\]
Here $c_{W,R}$ is given by \eqref{c_general} for a general $W$, and for potentials $W_k$ with $k\in (-n,2]$ it becomes $c_{W_k,R} = (2R)^{k-2}$ by \eqref{eq:cwkr}.

Applying Proposition~\ref{prop_m2} (with $\rho$ replaced by $T_{x_0}\rho$; note that $\rho^* = (T_{x_0}\rho)^*$), we have
\[
M_2[T_{x_0}\rho] - M_2[\rho^*] \geq W_2^2(T_{x_0}\rho, \rho^*).
\]
We then conclude the proof by combining the above two inequalities together.
\end{proof}

\begin{remark}\label{rmk_w2}(a)
For potentials $W_k$ with $k\in(-n,2]$, the following scaling argument shows that the power $k-2$ in the constant $c(W_k,R) = (2R)^{k-2}$ in Theorem~\ref{theorem2} is sharp. Fix any $\rho\in\mathcal{P}(\mathbb{R}^n)\cap L^\infty(\mathbb{R}^n)$ supported in $B(0,1)$ that does not coincide with $T_{-x_0}\rho^*$, and denote 
$A := \mathcal{E}_{W_k}[\rho^*] - \mathcal{E}_{W_k}[\rho]>0$ and $B := W_2^2(T_{x_0}\rho, \rho^*)>0$. For any $R>0$, define $\rho_R := R^{-n} \rho(R^{-1} x)$, so that $\rho_R$ is supported in $B(0,R)$, and it has center of mass $x_{0R} := Rx_0$. One can easily check that $\mathcal{E}_{W_k}[\rho_R^*] - \mathcal{E}_{W_k}[\rho_R] = R^k A$, and $W_2^2(T_{x_{0R}}\rho_R, \rho_R^*) = R^2B $. In order for $R^k A \geq C(W_k,R) R^2 B$ to hold for all $R>0$, we know that $C(W_k,R) \sim R^{k-2}$ is the sharp power.

(b) We point out that for any potential $W$ with $\lim_{|x|\to\infty} W(x) |x|^{-2}=0$,  it is necessary to allow $C(W,R)$ to depend on $R$. Let $B\subset\mathbb{R}^n$ be a ball centered at the origin with $|B|=1$, and let $B_+ := B\cap \{x_1\geq 0\}, B_- := B\cap \{x_1< 0\}$. For $R>3$, let $x_R = (R,0,\dots, 0)$, and finally let $\rho =T_{x_R} B_- + T_{-x_R} B_+$. (So $\rho$ is obtained by splitting $1_B$ into two halves and shifting them in opposite directions by distance $R$.) 
 Then we have $W_2^2(\rho, \rho^*) = R^2$, whereas $\mathcal{E}_{W}[\rho^*] - \mathcal{E}_{W}[\rho] \ll R^2$. Therefore one has to allow the constant in \eqref{eq_w2} to go to zero as $R\to\infty$. Also note that one cannot replace the $R$ dependence by the support radius of $\rho^*$, since in this example we have $\supp\rho^* = B$.

\end{remark}

\section{Stability for the Newtonian potential}\label{sec5}

Now we turn to the special case for Newtonian potential $\mathcal{N}$, and aim to prove Theorem~\ref{theorem3}. As can be seen in \eqref{eq_newtonian}, $\mathcal{E}_\mathcal{N}$ is closely related to the $\dot H^{-1}$ norm. This allows us to use the remarkable observation by Loeper \cite{Loeper} on the connection between the 2-Wasserstein distance and $\dot H^{-1}$ norm: 

\begin{proposition}[{\cite[Proposition~2.8]{Loeper}}] \label{prop_loeper}
For $\rho_1, \rho_2$ in $\mathcal{P}_2(\mathbb{R}^n)\cap L^\infty(\mathbb{R}^n)$, we have
\[
\|\rho_1-\rho_2\|_{\dot H^{-1}(\mathbb{R}^n)} \leq \max\{\|\rho_1\|_{\infty}, \|\rho_2\|_{\infty}\}^{1/2} W_2(\rho_1, \rho_2).
\]
\end{proposition}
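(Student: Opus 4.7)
The plan is to prove the bound by Benamou--Brenier style reasoning along the displacement interpolation between $\rho_1$ and $\rho_2$, combined with the dual characterization
\[
\|f\|_{\dot H^{-1}(\mathbb{R}^n)} = \sup\Big\{\int f\varphi\,dx\ :\ \varphi\in C_c^\infty(\mathbb{R}^n),\ \|\nabla\varphi\|_{L^2}\le 1\Big\}.
\]
So fix a test function $\varphi$ with $\|\nabla\varphi\|_{L^2}\le 1$, let $T$ be the optimal transport map pushing $\rho_1$ to $\rho_2$ (which exists in this setting by Brenier's theorem, and whose existence is already used in the paper in Section~\ref{sec4}), and define the displacement interpolation $\rho_t := ((1-t)\,\mathrm{id} + tT)\#\rho_1$ for $t\in[0,1]$. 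The associated velocity field is characterized by $v_t(X_t(x)) = T(x)-x$, where $X_t = (1-t)\mathrm{id}+tT$, so that the continuity equation $\partial_t\rho_t + \nabla\cdot(\rho_t v_t)=0$ holds and
\[
\int_{\mathbb{R}^n}\rho_t|v_t|^2\,dy \;=\; \int_{\mathbb{R}^n}\rho_1(x)|T(x)-x|^2\,dx \;=\; W_2^2(\rho_1,\rho_2)
\]
for every $t$.

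Next, I would write the difference as a time integral along the interpolation:
\[
\int_{\mathbb{R}^n}(\rho_2-\rho_1)\varphi\,dx
= \int_{\mathbb{R}^n}\rho_1(x)\bigl(\varphi(T(x))-\varphi(x)\bigr)dx
= \int_0^1\!\!\int_{\mathbb{R}^n}\rho_t(y)\,\nabla\varphi(y)\cdot v_t(y)\,dy\,dt,
\]
where the second equality follows from the fundamental theorem of calculus along the straight segment $X_t(x)$ and the change of variables $y=X_t(x)$ (with $\rho_t = X_t\#\rho_1$). Then by Cauchy--Schwarz in $y$ followed by Cauchy--Schwarz in $t$,
\[
\Bigl|\int(\rho_2-\rho_1)\varphi\,dx\Bigr|
\le \int_0^1\!\Bigl(\int\rho_t|\nabla\varphi|^2\Bigr)^{\!1/2}\!\Bigl(\int\rho_t|v_t|^2\Bigr)^{\!1/2}dt
\le \|\rho_t\|_{L^\infty_{t,x}}^{1/2}\,\|\nabla\varphi\|_{L^2}\,W_2(\rho_1,\rho_2).
\]

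The main technical point, and what I would single out as the main obstacle, is the uniform bound
\[
\sup_{t\in[0,1]}\|\rho_t\|_{L^\infty(\mathbb{R}^n)} \;\le\; M := \max\{\|\rho_1\|_\infty,\|\rho_2\|_\infty\}.
\]
This is precisely McCann's displacement convexity of $L^p$ norms: the function $t\mapsto \int \rho_t^p\,dx$ is convex on $[0,1]$ for every $p\in(1,\infty)$, so $\|\rho_t\|_p \le \max\{\|\rho_1\|_p,\|\rho_2\|_p\}$, and sending $p\to\infty$ gives the claim. This is already invoked in the paper in the proof of Lemma~\ref{lemma_char} via \cite[Proposition 9.3.9]{AGS} and \cite[Theorem 2.2]{McCann}, so I can simply quote it. Inserting this bound into the displayed inequality and taking the supremum over admissible $\varphi$ yields
\[
\|\rho_1-\rho_2\|_{\dot H^{-1}} \le M^{1/2}\,W_2(\rho_1,\rho_2),
\]
which is the desired estimate. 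A couple of minor points to handle in the write-up are ensuring the change-of-variables identity holds for $L^\infty$ densities (standard, since $T$ is a Brenier map and the interpolation stays in $L^\infty$), and justifying the use of compactly supported smooth test functions in the dual definition (standard density argument).
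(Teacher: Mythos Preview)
The paper does not give its own proof of this proposition: it is quoted directly from \cite[Proposition~2.8]{Loeper} and then applied in the proof of Theorem~\ref{theorem3}. Your proposal is essentially Loeper's original argument (dual pairing, displacement interpolation, continuity equation, Cauchy--Schwarz, and the $L^\infty$ bound on the interpolant via displacement convexity), and it is correct; there is nothing further to compare.
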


\medskip
Below we prove Theorem~\ref{theorem3}, which follows immediately by combining Proposition~\ref{prop_loeper} with Theorem~\ref{theorem2}. 
\begin{proof}[\textbf{\textup{Proof of Theorem~\ref{theorem3}}}]
Note that for $n\geq 1$, the Newtonian potential in $\mathbb{R}^n$ is given by $\mathcal{N} = c_n W_{-n+2}$ for some constant $c_n>0$. For any  $\rho\in L_1^+(\mathbb{R}^n)\cap L^\infty(\mathbb{R}^n)$, $\mu := \rho/\|\rho\|_1$ is a probability density, therefore we can apply 
Theorem~\ref{theorem2} to $\mu$ (and use the explicit constant $C(W_k,R)=(2R)^{k-2}$) to obtain
\begin{equation}\label{eq_thm2_temp}
 \mathcal{E}_{\mathcal{N}}[\mu^*] - \mathcal{E}_{\mathcal{N}}[\mu] \geq c(n) R^{-n}\,  W_2^2(T_{x_0}\mu, \mu^*),
\end{equation}
where we also used that $\mu$ and $\rho$ has the same support and the same center of mass $x_0$.
Applying Proposition~\ref{prop_loeper} to $\rho_1 = T_{x_0}\mu$ and $\rho_2 = \mu^*$ gives the following (note that $\rho_1, \rho_2$ both have the same $L^\infty$ norm as $\mu$):
\[
W_2^2( T_{x_0}\mu, \mu^*) \geq \|\mu\|_{\infty}^{-1} \| T_{x_0}\mu- \mu^*\|_{\dot H^{-1}(\mathbb{R}^n)}^2,
\]
thus combining it with \eqref{eq_thm2_temp} yields
\[
 \mathcal{E}_{\mathcal{N}}[\mu^*] - \mathcal{E}_{\mathcal{N}}[\mu] \geq \frac{c(n)}{\|\mu\|_{\infty} R^n}  \|T_{x_0}\mu- \mu^*\|_{\dot H^{-1}(\mathbb{R}^n)}^2 = \frac{c(n)}{\|\mu\|_{\infty} R^n}   \mathcal{E}_{\mathcal{N}}[T_{x_0}\mu - \mu^*].
\]
Finally, plugging $\mu = \rho/\|\rho\|_1$ into above gives the following inequality for $\rho$:
\[
 \mathcal{E}_{\mathcal{N}}[\rho^*] - \mathcal{E}_{\mathcal{N}}[\rho] \geq\frac{c(n)\|\rho\|_1}{\|\rho\|_{\infty} R^n} \,  \mathcal{E}_{\mathcal{N}}[T_{x_0}\rho - \rho^*],
\]
finishing the proof.
\end{proof}

The rest of this section is devoted to the proof of Theorem~\ref{theorem4}, which shows that for $n\geq 3$, the conjecture
 \eqref{conj_guo} cannot hold if the constant $c(n)$ is only allowed to depend on $n$. The density that we construct is almost the same as $1_{B(0,1)}$, plus an extra spike (with high density but a tiny mass) centered at distance 6 away from the origin.

\begin{proof}[\textbf{\textup{Proof of Theorem~\ref{theorem4}}}] For $0<\epsilon\ll 1$, let $\rho_1$, $\rho_2$ be two radially decreasing densities given by 
\[\rho_1(x) := 1_{B(0,1)}(x), \quad \rho_2(x) := \epsilon^{-(\frac{n}{2}+1)}1_{B(0,\epsilon)}(x).\]   Note that $\rho_2$ has a large $L^\infty$ norm but a small mass: namely, $\|\rho_2\|_1 = \omega_n \epsilon^{\frac{n}{2}-1}\ll 1$ since $n\geq 3$.

Let us fix $p := (6,0,\dots,0) \in \mathbb{R}^n$, and define 
\begin{equation}\label{def_rho}
\rho := \rho_1 + T_p \rho_2
\end{equation}
throughout the proof.
Since $|p|=6$, we know $\rho_1$ and $T_p \rho_2$ have disjoint supports, and $\|\rho\|_\infty = \epsilon^{-(\frac{n}{2}+1)} \gg 1$.
The goal of this proof is to show that there exist constants $C(n)>0$ and $c(n)>0$, such that the following two inequalities hold for all sufficiently small $\epsilon \in (0,1)$:
\begin{equation}\label{claim1}
\mathcal{E}_{\mathcal{N}}[\rho^*] - \mathcal{E}_{\mathcal{N}}[\rho] \leq C(n) \epsilon^{\frac{n}{2}-1}
\end{equation}
and
\begin{equation}\label{claim2}
\inf_a \mathcal{E}_\mathcal{N}[T_a\rho - \rho^*] \geq c(n).
\end{equation}
Once we obtain these two inequalities, combining them two together directly yields \eqref{eq_counterexample}.

Since both inequalities involve $\rho^*$, we start with its explicit formula. One can easily check that 
\[\rho^*(x)=\begin{cases}
\epsilon^{-(\frac{n}{2}+1)} &\text{ for } |x|\leq \epsilon,\\
1 & \text{ for } \epsilon <  |x| \leq (1+\epsilon^n)^{1/n},\\
0 & \text{otherwise}.
\end{cases}
\]
Thus we can rewrite it as
\begin{equation}\label{rho_star}
\rho^* = \epsilon^{-(\frac{n}{2}+1)} 1_{B(0,\epsilon)} + 1_{B(0,1)} + g =  \rho_2 + \rho_1 + g,
\end{equation}
where the remainder term $g$ satisfies $g=-1$ for $|x|<\epsilon$, $g=1$ for $1<|x|<(1+\epsilon^n)^{1/n}$, and $g=0$ otherwise. As a result we have $\|g\|_\infty=1$ and $\|g\|_1 = 2\omega_n \epsilon^n\ll 1$.

Now we are ready to prove \eqref{claim1}.  Let us expand its left hand side as
\begin{equation}\label{eq_diff_temp}
\begin{split}
\mathcal{E}_{\mathcal{N}}[\rho^*] - \mathcal{E}_{\mathcal{N}}[\rho] &= \mathcal{E}_{\mathcal{N}}[\rho_1 +\rho_2 + g] - \mathcal{E}_{\mathcal{N}}[\rho_1 + T_p \rho_2]\\
&= 2 \underbrace{\int (\rho_1*\mathcal{N})(\rho_2 - T_p\rho_2)dx}_{=: I_1} + \underbrace{\int (g*\mathcal{N})(2\rho_1 + 2\rho_2 + g) dx}_{=: I_2},
\end{split}
\end{equation}
where in the second equality we used that $\int \rho_2(\rho_2 * \mathcal{N})dx = \int T_p\rho_2\,(T_p\rho_2 * \mathcal{N})dx$, which follows from the fact that $\mathcal{E}_\mathcal{N}$ is invariant under translations.

We then control $I_1$ and $I_2$ as follows:
\begin{equation}\label{estimate_I1}
|I_1| \leq 2\|\rho_2\|_1 \|\rho_1*\mathcal{N}\|_\infty \leq C(n) \epsilon^{\frac{n}{2}-1},
\end{equation}
and
\[
\begin{split}
|I_2| &\leq 2\|g\|_1 \|\rho_1*\mathcal{N}\|_\infty +  \|g*\mathcal{N}\|_\infty (2\|\rho_2\|_1 + \|g\|_1)\\
& \leq C(n) \epsilon^n + C(n) \|g*\mathcal{N}\|_\infty \epsilon^{\frac{n}{2}-1}.
\end{split}
\]
Note that $\|g*\mathcal{N}\|_\infty$ satisfies the bound
\begin{equation}\label{g*N}
\|g*\mathcal{N}\|_\infty \leq \| |g|*\mathcal{N}\|_\infty \leq \int |g|^*(x)\mathcal{N}(x)dx \leq \int_{B(0,2\epsilon)} \mathcal{N}(x)dx = C(n) \epsilon^2,
\end{equation}
where the second inequality is due to the Hardy--Littlewood inequality $\int f_1 f_2 dy \leq \int f_1^* f_2^* dy$, and the third inequality is due to the fact that $|g|^*$ is bounded by 1, and has support size $2\omega_n \epsilon^n$ (hence is supported in $B(0,2\epsilon))$. Plugging this inequality into the $I_2$ estimate yields $|I_2| \leq C(n)\epsilon^{\frac{n}{2}+1}$. We then combine it with the estimate for $I_1$ in \eqref{estimate_I1} and apply these to \eqref{eq_diff_temp} to finish the proof of \eqref{claim1}.

In the rest of the proof we aim to show \eqref{claim2}. Let us take any $a\in\mathbb{R}^n$. Using the expressions for $\rho$ and $\rho^*$ in \eqref{def_rho} and \eqref{rho_star}, we have
\[
\begin{split}
T_a\rho- \rho^* &= T_a (\rho_1 + T_p \rho_2) - (\rho_1 + \rho_2 + g) \\
&= T_a \rho_1 + T_{a+p} \rho_2 - \rho_1 - \rho_2 - g.
\end{split}
\]
Thus
\begin{equation}\label{newton_diff}
\begin{split}
\mathcal{E}_{\mathcal{N}}[T_a \rho - \rho^*] &= \mathcal{E}_{\mathcal{N}}[(T_a \rho_1 - \rho_1)  + (T_{a+p} \rho_2 - \rho_2) - g]\\
&=: \underbrace{\mathcal{E}_{\mathcal{N}}[T_a \rho_1 - \rho_1]}_{=:J_1} + \underbrace{\mathcal{E}_{\mathcal{N}}[T_{a+p}\rho_2 - \rho_2]}_{=:J_2}+ J_{\text{cross}} + J_g,
\end{split}
\end{equation}
where $J_{\text{cross}}$ contains the cross terms resulted from the two parentheses in the first identity, and $J_g$ contains all terms with $g$. Let us first show that $J_{\text{cross}}$ and $J_g$ can both be made sufficiently small for $\epsilon\ll 1$.
Here the cross terms can be controlled as
\[
\begin{split}
|J_{\text{cross}}| &= \Big|\int 2(T_{a+p}\rho_2 - \rho_2) \big((T_a \rho_1 - \rho_1)*\mathcal{N}\big) dx \Big|\\
& \leq 2\|T_{a+p} \rho_2 - \rho_2\|_1 \|(T_a \rho_1 - \rho_1)*\mathcal{N}\|_\infty\\
&\leq 4 \|\rho_2\|_1 \cdot 2\|\rho_1*\mathcal{N}\|_\infty \\
&\leq C(n) \epsilon^{\frac{n}{2}-1},
\end{split}
\]
where the second-to-last inequality follows from the Hardy--Littlewood inequality that $\|T_a \rho_1 * \mathcal{N}\|_\infty \leq \|(T_a \rho_1)^* * \mathcal{N}\|_\infty = \|\rho_1*\mathcal{N}\|_\infty$.
As for the terms involving $g$, they can be written as 
\[J_g = \int (g*\mathcal{N}) (-2T_a \rho_1 + 2 \rho_1 - 2 T_{a+p} \rho_2 + 2\rho_2 + g) dx ,\] and using the bound \eqref{g*N} one directly obtains that $|J_g| \leq C(n) \epsilon^2$.

Finally we move on to the terms $J_1$ and $J_2$, which are both nonnegative since $\mathcal{E}_\mathcal{N}[f]\geq 0$ for any $f$. For any $a\in\mathbb{R}^n$, the triangle inequality gives us that $|a| + |a+p| \geq |p| = 6$, thus we either have $|a|\geq 3$, or $|a+p|\geq 3$, or both. Below we discuss these two cases respectively.

\textbf{Case 1}.  $|a|\geq 3$. In this case we will show $J_1\geq C(n) > 0$. It can be bounded below as 
\begin{equation}\label{eq1}
\begin{split}
J_1 &=  \int (T_a\rho_1*\mathcal{N})T_a \rho_1  dx + \int (\rho_1*\mathcal{N}) \rho_1  dx- 2 \int  ( \rho_1 *\mathcal{N}) T_a \rho_1 dx \\
&= 2\int  ( \rho_1 *\mathcal{N}) (\rho_1 - T_a \rho_1) dx\\
&\geq 2\|\rho_1\|_1 \left( \inf_{B(0,1)} (\rho_1 *\mathcal{N}) - \sup_{B(0,2)^c}  (\rho_1 *\mathcal{N})\right),
\end{split}
\end{equation}
where the second equality follows from $ \int (T_a\rho_1*\mathcal{N})T_a \rho_1  dx  =  \int (\rho_1*\mathcal{N}) \rho_1  dx$,  and the inequality follows from the fact that $\rho_1$ is nonnegative and supported in $B(0,1)$, whereas $\text{supp}\,T_a \rho_1 \subset B(0,2)^c$ due to $|a|>3$. 

We point out that the  right hand side of \eqref{eq1} is nonnegative since $\rho_1 * \mathcal{N}$ is radially decreasing (since it is the convolution of two radially decreasing functions). In fact, it is strictly radially decreasing: for any $r>0$, divergence theorem yields that
\[
\partial_r (\rho_1*\mathcal{N})(r) = \frac{1}{|\partial B(0,r)|} \int_{\partial B(0,r)} \nabla(\rho_1*\mathcal{N}) \cdot \vec n d\sigma = -\frac{\int_{B(0,r)} \rho_1 dx}{n \omega_n r^{n-1}} < 0.
\]
Thus $\inf_{B(0,1)} (\rho_1 *\mathcal{N}) - \sup_{B(0,2)^c}  (\rho_1 *\mathcal{N}) = c(n)>0$, and plugging it into \eqref{eq1} gives $J_1 \geq c(n)>0$.

\textbf{Case 2}. $|a+p|\geq 3$. In this case we will show $J_2\geq C(n) > 0$ for all $\epsilon\in (0,1)$ that is sufficiently small.  Using the translational invariance of $\mathcal{E}_\mathcal{N}$, we expand $J_2$ as 
\begin{equation}\label{J2}
\begin{split}
J_2 &= 2\int \rho_2 (\mathcal{N}* \rho_2)dx - 2 \int T_{a+p} \rho_2\,  (\mathcal{N}* \rho_2)dx.
\end{split}
\end{equation}
The first integral is positive and can be bounded below as
\begin{equation}\label{j21}
\int \rho_2 (\mathcal{N}* \rho_2)dx = \iint \rho_2(x)\rho_2(y) \mathcal{N}(x-y) dxdy \geq \|\rho_2\|_1^2 C(n)(2\epsilon)^{2-n} = C(n)>0, 
\end{equation}
where the second step follows from the fact that $\text{supp} \,\rho_2 = B(0,\epsilon)$, and $\mathcal{N}(x-y)\geq C(n)(2\epsilon)^{2-n}$ for all $x,y \in B(0,\epsilon)$.
As for the second integral in \eqref{J2}, it can be made sufficiently small for small $\epsilon$:
\begin{equation}\label{j22}
\int T_{a+p}\rho_2  \,(\mathcal{N}* \rho_2)dx = \iint (T_{a+p}\rho_2)(x)\rho_2(y) \mathcal{N}(x-y) dxdy \leq \|\rho_2\|_1^2 \mathcal{N}(1) \leq C(n) \epsilon^{n-2},
\end{equation}
where the first inequality follows from the fact that $\text{supp}\, \rho_2 \subset B(0,1)$ and $\text{supp}\, T_{a+p}\rho_2 \subset B(0,2)^c$ (recall that $|a+p|\geq 3$), so the two supports are disjoint with at least distance 1 from each other.
Combining \eqref{j21} and \eqref{j22} gives that $J_2 > C(n)$ for sufficiently small $\epsilon>0$.

Finally, since for any $a\in \mathbb{R}^n$, at least one of Case 1 and Case 2 must be true, we have $J_1 + J_2 \geq C(n)>0$ for  sufficiently small $\epsilon>0$ (where we also use that $J_1, J_2\geq 0$). Applying this to \eqref{newton_diff} and combining with the previous estimates $|J_{\text{cross}}| = O(\epsilon^{\frac{n}{2}-1})$ and $|J_g| = O(\epsilon^2)$ yields \eqref{claim2}, finishing the proof.
\end{proof}

\vspace{0.5cm}
\begin{tabular}{ll}
\textbf{Xukai Yan} & \textbf{Yao Yao} \\
{\small Department of Mathematics}  & {\small School of Mathematics}\\
{\small Oklahoma State University } & {\small Georgia Institute of Technology}\\
{\small 401 Mathematical Sciences Building\qquad\qquad}  & {\small 686 Cherry Street}\\
{\small Stillwater, OK 74078} & {\small  Atlanta, GA 30332}\\
{\small Email: xuyan@okstate.edu} & {\small Email: yaoyao@math.gatech.edu}\\
   & \\
\end{tabular}


\begin{thebibliography}{[MT]}
\bibitem{AGS}
L.~Ambrosio, N.~Gigli, and G.~Savar{\'e},
\newblock {\em Gradient flows in metric spaces and in the space of probability
  measures},
\newblock Lectures in Mathematics ETH Z\"urich. Birkh\"auser Verlag, Basel,
  2008.
\bibitem{BC} A. Burchard and G. Chambers, Geometric stability of the Coulomb energy, \emph{Calc. Var. PDE}, 54, no. 3, 3241--3250, 2015.
\bibitem{BC2} A. Burchard and G. Chambers, A stability result for Riesz potentials in higher dimensions. preprint, arXiv:2007.11664, 2020.
\bibitem{BG} A. Burchard and Y. Guo, Compactness via symmetrization, \emph{J. Func. Anal.}, 214, 40--73, 2004.
\bibitem{CM} E. Carlen and F. Maggi, Stability for the Brunn-Minkowski and Riesz rearrangement inequalities, with applications to Gaussian concentration and finite range non-local isoperimetry, \emph{Can. J. Math.}, 69(5), 1036--1063, 2017.

\bibitem{CMV}J.A. Carrillo,  R.J. McCann, C. Villani, Kinetic equilibration rates for granular media and related equations: entropy dissipation and mass transportation estimates, \emph{Rev. Matem\'{a}tica Iberoamericana} 19, 1--48, 2003.
\bibitem{CMV2}J.A. Carrillo,  R.J. McCann, C. Villani, Contractions in the 2-Wasserstein length space and thermalization of granular media, \emph{Arch. Ration. Mech. Anal.}, 179, 217--263, 2006.

\bibitem{CFT}
R. Choksi, R.~C. Fetecau, and I. Topaloglu, On minimizers of interaction functionals with competing attractive and repulsive potentials, \emph{Ann.Inst. H. Poincar\'e Anal. Non Lin\'eaire,} 32, 1283--1305, 2015.
\bibitem{christ} M.  Christ,  A  sharpened  Riesz-Sobolev  inequality, preprint, arXiv:1706.02007, 2017.
\bibitem{DNRV}  A. Di Castro, M. Novaga, B. Ruffini and E. Valdinoci,  Nonlocal quantitative isoperimetric inequalities, \emph{Calc. Var. PDE}, 54, 2421--2464, 2015.
\bibitem{Fuglede} B. Fuglede, Stability in the isoperimetric problem for convex or nearly spherical domains in $\mathbb{R}^n$, \emph{Trans. Amer. Math. Soc.} 314, no. 2, 619--638, 1989.
\bibitem{FFMMM} A. Figalli, N. Fusco, F. Maggi, V. Millot, and M. Morini, Isoperimetry and stability properties of balls with respect to nonlocal energies, \emph{Comm. Math. Phy.}, 336, 441--507, 2015.
\bibitem{FL} R. Frank and E.~H. Lieb, Proof of spherical flocking based on quantitative rearrangement inequalities, to appear in \emph{Ann. Sc. Norm. Super. Pisa Cl. Sci}, arXiv:1909.04595, 2019.
\bibitem{FL2} 
 R. Frank and E.~H. Lieb, A note on a theorem of M. Christ, preprint, arXiv:1909.04598, 2019.
\bibitem{FP} N. Fusco and A. Pratelli, Sharp stability for the Riesz potential, to appear in \emph{ESAIM: Control, Optimisation and Calculus of Variations}, arXiv:1909.11441, 2019.
\bibitem{GNR} M. Goldman, M. Novaga, and B. Ruffini, Existence and stability for a non-local isoperimetric model of charged liquid drops, \emph{Arch. Rat. Mech. Anal.}, 217, 1--36, 2015.
\bibitem{Lemou} M. Lemou,  Extended Rearrangement inequalities and applications to some quantitative stability results, \emph{Comm. Math. Phy.}, 348, 695--727, 2016.
\bibitem{Lieb} E.~H. Lieb, Existence and uniqueness of the minimizing solution of Choquard's nonlinear equation, \emph{Stud. Appl. Math.} 57, 93--105, 1977.
\bibitem{LiebLoss} E.~H. Lieb and M. Loss, \emph{Analysis}, Graduate Studies in Mathematics, 14. American Mathematical Society, Providence, RI, 1997.
\bibitem{LM} T. Lim and R.~J.~McCann, Isodiametry, variance, and regular simplices from particle interactions. preprint, arXiv:1907.13593, 2019.
\bibitem{Loeper} G. Loeper, Uniqueness of the solution to the Vlasov--Poisson system with bounded density. \emph{J. Math. Pures Appl.} 86, 68--79, 2006.
\bibitem{McCann} R.~J.~McCann, A convexity principle for interacting gases. \emph{Adv. Math.} 128(1), 153--179, 1997.
\bibitem{Villani}
C.~Villani,
\newblock {\em Topics in optimal transportation}, volume~58 of {\em Graduate
  Studies in Mathematics},
\newblock American Mathematical Society, Providence, RI, 2003.
\end{thebibliography}
\end{document}